\def\x{{\bm x}}
\def\n{{\bm n}}
\def\I{{\bm I}}
\def\teq{\triangleq}
\newcommand{\be}{\begin{equation}}
\newcommand{\ee}{\end{equation}}
\newcommand{\ba}{\begin{array}}
\newcommand{\ea}{\end{array}}
\newcommand{\bea}{\begin{eqnarray}}
\newcommand{\eea}{\end{eqnarray}}
\def\sgn{\text{sgn}}
\newcommand{\bl}[1] {\text{\boldmath ${\lambda}$}_{#1}}
\newcommand{\vg}[1] {{\mbox{{\boldmath ${#1}$}}}}
\newcommand{\vgs}[2] {\vg{#1}_{#2}}
\newcommand{\EX}[1] {{\mathbb{E}}\left\{{#1}\right\}}
\def\1N{\vgs{1}{\!N}}
\def\nmin{n_{\text{min}}}
\def\nmax{n_{\text{max}}}
\def\Nmin{N_{\text{min}}}
\def\Nmax{N_{\text{max}}}
\def\bi{\text{\bm b}_{[i]}}
\def\teq{\triangleq}
\newcommand{\ith}[1]    {{#1}^{\underline{\text{th}}}}
\newcommand{\AZ}{\textcolor[rgb]{0,0,0}}
\def\bi{\begin{itemize}}
\def\ei{\end{itemize}}
\begin{document}

\begin{acronym}
\acro{AcR}{autocorrelation receiver} \acro{ACF}{autocorrelation
function} \acro{ADC}{analog-to-digital converter}
\acro{AWGN}{additive white Gaussian noise} \acro{BEP}{bit error
probability} \acro{BFC}{block fading channel} \acro{BPAM}{binary
pulse amplitude modulation} \acro{BPPM}{binary pulse position
modulation} \acro{BPSK}{binary phase shift keying}
\acro{BPZF}{bandpass zonal filter} \acro{CD}{cooperative
diversity} \acro{CDF}{cumulative distribution function}
\acro{CDMA}{code division multiple access}
\acro{c.d.f.}{cumulative distribution function}
\acro{ch.f.}{characteristic function} \acro{CIR}{channel impulse
response} \acro{CSCG}{circularly symmetric complex Gaussian}\acro{CSI}{channel state information}
\acro{CSIR}{channel state information at the receiver}
\acro{CSIT}{channel state information at the transmitter}
\acro{DAA}{detect and avoid} \acro{DAB}{digital audio
broadcasting} \acro{D-BLAST}{diagonally-layered space-time
architecture} \acro{E-DFE}{erasure decision feedback equalizer}
\acro{DFE}{decision feedback equalizer} \acro{DS}{direct sequence}
\acro{DS-SS}{direct-sequence spread-spectrum}
\acro{DTR}{differential transmitted-reference}
\acro{DVB-T}{digital video broadcasting\,--\,terrestrial}
\acro{ELP}{equivalent low-pass} \acro{FCC}{Federal Communications
Commission} \acro{FEC}{forward error correction} \acro{FFT}{fast
Fourier transform} \acro{FE}{fixed
energy}\acro{FH}{frequency-hopping} \acro{FH-SS}{frequency-hopping
spread-spectrum} \acro{FR}{fixed rate}\acro{GA}{Gaussian
approximation} \acro{GPS}{Global Positioning System}
\acro{HAP}{high altitude platform} \acro{i.i.d.}{independent,
identically distributed} \acro{IFFT}{inverse fast Fourier
transform} \acro{IR}{impulse radio} \acro{ISI}{intersymbol
interference} \acro{LEO}{low earth orbit}
\acro{LHS}{left-hand side}
\acro{LOS}{line-of-sight} \acro{BSC}{binary symmetric channel}
\acro{MB}{multiband} \acro{MC}{multicarrier} \acro{MF}{matched
filter} \acro{ML}{Maximum Likelihood} \acro{m.g.f.}{moment
generating function}
\acro{MGF}{moment generating function}\acro{MI}{mutual information}
\acro{MIMO}{multiple-input multiple-output}
\acro{MISO}{multiple-input single-output} \acro{MRC}{maximal ratio
combiner} \acro{MMSE}{minimum mean-square error}
\acro{MQAM}{$M$-ary quadrature amplitude modulation}
\acro{MPSK}{$M$-ary phase shift keying} \acro{MUI}{multi-user
interference} \acro{NB}{narrowband} \acro{NBI}{narrowband
interference} \acro{NLOS}{non-line-of-sight} \acro{NTIA}{National
Telecommunications and Information Administration}
\acro{OC}{optimum combining} \acro{OFDM}{orthogonal
frequency-division multiplexing} \acro{p.d.f.}{probability
distribution function} \acro{PAM}{pulse amplitude modulation}
\acro{PAR}{peak-to-average ratio} \acro{PDF}{probability density
function} \acro{PDP}{power dispersion profile}
\acro{p.m.f.}{probability mass function} \acro{PN}{pseudo-noise}
\acro{PPM}{pulse position modulation} \acro{PRake}{Partial Rake}
\acro{PSD}{power spectral density} \acro{PSK}{phase shift keying}
\acro{QAM}{quadrature amplitude modulation} \acro{QPSK}{quadrature
phase shift keying}
\acro{RHS}{right-hand side}
\acro{r.v.'s}{random variables}
\acro{r.v.}{random variable} \acro{R.V.}{random vector}
\acro{SCN}{standard condition number}
\acro{SEP}{symbol error probability} \acro{SER}{symbol error rate}
\acro{SIC}{successive interference cancellation}
\acro{SIMO}{single-input multiple-output}
\acro{SIR}{signal-to-interference ratio} \acro{SISO}{single-input
single-output} \acro{SINR}{signal-to-interference plus noise
ratio} \acro{SNR}{signal-to-noise ratio} \acro{SS}{spread
spectrum} \acro{SVD}{singular value decomposition}
\acro{TH}{time-hopping} \acro{ToA}{time-of-arrival}
\acro{TR}{transmitted-reference} \acro{BLAST}{Bell Laboratories
Layered Space-Time} \acro{V-BLAST}{Vertical Bell Laboratories
Layered Space-Time} \acro{UA}{uniform
energy allocation}\acro{UAV}{unmanned aerial vehicle} \acro{UPA}{uniform
power allocation} \acro{UWB}{ultrawide band} \acro{WLAN}{wireless
local area network} \acro{WMAN}{wireless metropolitan area
network} \acro{WPAN}{wireless personal area network}
\acro{WSN}{wireless sensor network} \acro{WSS}{wide-sense
stationary} \acro{ZF}{Zero Forcing}
\end{acronym}

\begin{acronym}
\acro{GUE}{Gaussian unitary ensemble}
\acro{GOE}{Gaussian orthogonal ensemble}
\end{acronym}

\title{On the Distribution of an Arbitrary Subset of the Eigenvalues for some Finite Dimensional Random Matrices}

\author{Marco Chiani}

\address{DEI, University of Bologna, \\
    Viale Risorgimento 2,
    40136 Bologna, ITALY
    (e-mail:  {\tt marco.chiani@unibo.it}).}

\author{Alberto Zanella}

\address{National Research Council of Italy (CNR), IEIIT, \\
    Viale Risorgimento 2,
    40136 Bologna, ITALY
    (e-mail: {\tt alberto.zanella@cnr.it}).}

\maketitle


\begin{abstract}
We present some new results on the joint distribution of an arbitrary subset of the ordered eigenvalues of complex Wishart, double Wishart, and Gaussian hermitian random matrices of finite dimensions, using a tensor pseudo-determinant operator.
Specifically, we derive compact expressions for the joint  \acl{p.d.f.} of the eigenvalues and the expectation of functions of the eigenvalues, including joint moments, for the case of both ordered and unordered eigenvalues. 
\end{abstract}

\section{Introduction}

The distribution of the eigenvalues of random matrices appears in multivariate statistics, including principal component analysis and analysis of large data sets, in physics, including  nuclear spectra, quantum
theory, atomic physics, in communication theory, especially in relation to multiple-input multiple-output systems, and in signal processing \cite{And:03,Mui:B82,Meh:04,For:B10,Handbook:11,Wint:87,Ede:88,Fos:96,Tel:99,Joh:01,ChiWinZan:J03,SmiRoySha:J03,ShiWinLeeChi:J05,CanTao:05,PenGar:09,CheMcK:12}. 
%
%
For example, the probability that the eigenvalues of a random symmetric matrix are within an interval finds application in the analysis of the stability in physics, complex networks, complex ecosystems \cite{May:72,AazEas:06,DeaMaj:08,MarMca:13,Chi:17}, for the analysis of the restricted isometry constant in compressed sensing \cite{CanTao:05,Don:06,Can:08,ElzGioChi:18}, and it is also related to the expected number of minima in random polynomials \cite{DedMal:07}.  
The distribution of the eigenvalues appears also in statistical ranking and selection theory for radar signal processing \cite{WaxKai:85,CheWicAdv:01,ChiWin:C10}, 
 in cognitive radio systems \cite{PenGarSpi:09,KoRaSeCa:11,MaMcSmNo:10,ZhMcRaWo:11,MarGioChi:J15,MarGioChi:J15a}, and for adaptive filter design \cite{Hay:08}. 

%

\AZ{Owing to the difficulties in computing the exact marginal distributions of eigenvalues, asymptotic formulas for matrices with large dimensions are often used as approximations. These approaches allow to investigate only specific subclasses of matrices. For example, the asymptotical distribution of the largest eigenvalue of Wishart matrices is known only for the uncorrelated case \cite{TraWid:09}. In the presence of correlation, the analysis is much more involved and Gaussian approximations are generally appplied \cite{DhNaSh:19}.}

For random matrices with finite dimensions (non-asymptotic analysis), the derivation of the distribution of eigenvalues is generally difficult. 
In particular, for complex matrices, which are the focus of this paper, only few results are available. Expressions for the \ac{c.d.f.} of the largest, smallest and $\ith{\ell}$ largest eigenvalue of a complex Wishart matrix have been  obtained in previous works (see for instance \cite{Kha:64,Kha:69}); however, the direct computation of the corresponding \ac{p.d.f.}'s from the \ac{c.d.f.} is not straightforward. A polynomial expression for the \ac{p.d.f.} largest eigenvalue for the uncorrelated central Wishart case was proposed in \cite{DigMalJam:J03,MaaAis:05}. The \ac{p.d.f.} of the largest eigenvalue for the case of uncorrelated noncentral Wishart was studied in \cite{AlTuLoVe:04}. Expressions for the \ac{c.d.f.} and a first order expansion for the \ac{p.d.f.} of $\lambda_{\ell}$ in the uncorrelated noncentral case were given in \cite{JiMcGaCo:J08}. The \ac{p.d.f.} of the $\ith{\ell}$ largest eigenvalue for uncorrelated central, correlated central and uncorrelated noncentral Wishart cases was also studied in \cite{KwaLeuHo:07,ChiZan:C08,ZanChiWin:J09,ZanChi:J12}.
The distribution of the largest eigenvalue and the probability that all eigenvalues are within an interval, as well as  efficient recursive methods for their numerical computation, has been  found for real and complex Wishart, multivariate Beta (also known as double Wishart or MANOVA), for the \ac{GOE} and for the \ac{GUE}  \cite{Chi:J14,Chi:J16,Chi:17}.\footnote{These matrices are also denominated, using the names of the associated weight polynomials, as Laguerre (Wishart), Jacobi (double Wishart), and Hermite (Gaussian) ensembles.} 
Expressions for the joint \ac{p.d.f.} of subsets of unordered eigenvalues of uncorrelated non central Wishart matrices were given in \cite{MarAis:J07}. 
Closed form expressions for the marginal \acp{c.d.f.} and \acp{p.d.f.} of some Hermitian random matrices, which also include Wishart matrices, were given in \cite{OrPaFo:09}. 
The \ac{MGF} of the largest eigenvalue for both uncorrelated and correlated central Wishart cases was given in \cite{McKColSmi:06}. 
Besides the finite case, approximations and asymptotics for uncorrelated Wishart and for spiked Wishart have been studied in recent literature (see e.g. \cite{Joh:01,BaiSil:06,Nad:08,JohNad:17}).  
%

The goal of the paper is to provide a unified framework for the derivation of marginal distributions, joint distribution and moments of subset of eigenvalues, for a general class of random matrices with finite size, including the \ac{GUE}, the correlated central Wishart matrices, with as a particular case the spiked Wishart, the uncorrelated noncentral Wishart matrices, and double Wishart matrices (multivariate beta). 
In particular, we generalize the results in \cite{ZanChiWin:J09,ChiZan:C08} and derive simple expressions for the joint \ac{p.d.f.} of an arbitrary subset of the eigenvalues. 
%

Indicating with $\lambda_{1} \geq \lambda_{2} \geq \ldots \geq \lambda_{M}$ the ordered nonzero eigenvalues for the mentioned random matrices,  
the contributions of the paper can be summarized as follows:
\begin{itemize}
\item We derive simple and concise expressions for the \ac{p.d.f.} of the $\ith{\ell}$ largest eigenvalue $\lambda_{\ell}$.

\item We obtain the joint distribution of $L$ arbitrary, ordered or unordered, eigenvalues. The joint distribution of two arbitrary ordered eigenvalues is a special case of this more general distribution.

\item We provide a compact expression for the expectation of statistics of the type ${\cal{S}}(\lambda_{1}, \ldots,\lambda_{M})=\prod_{i} \varphi_{i}\left(\lambda_{i}\right)$, where $\varphi_{i} \colon \mathbb{R} \to \mathbb{C}$ are arbitrary functions and $\lambda_{i}$ are the unordered eigenvalues. The joint moments of subsets of eigenvalues can be computed as a particular case.

\end{itemize}

Throughout the paper, we will use $f_X(x)$ to denote the \ac{p.d.f.} of the \ac{r.v.} X and $\EX{\cdot}$ to denote the expectation operator.
We will use bold for vectors and matrices, so that for example ${\bm x}$ denotes a vector, and ${\bm A} \in {\mathbb C}^{m \times n}$ denotes a $(m \times n)$ matrix with complex elements, $a_{i,j}$, with ${{\bm a}_j}$ denoting the $\ith{j}$ column vector of $ \bm A$. We will use $|{\bm A}|$ or $\det{\bm A}$ to denote the determinant of  ${\bm A} \in {\mathbb C}^{m \times m}$, and the superscript $(\cdot)^\dag$ for conjugation and transposition.
With ${\bm V(\bm x)}$ we indicate the Vandermonde matrix with elements $v_{i,j}=x^{i-1}_j$ and determinant $|{\bm V(\bm x)}|=\prod_{i<j}(x_{j}-x_{i})$.  
We denote by $r(x;a,b)$ the indicator function
\begin{equation*} \label{eq:rxab}
  r(x;a,b) \teq \begin{cases}
                    1 & \quad \text{if} \quad a\leq x\leq b \\
                    0 & \quad \text{elsewhere},
                \end{cases}
\end{equation*}
and with $\delta(\cdot)$ the Dirac's delta function. 

The paper is organized as follows.
The main theorems to the eigenvalue distribution of some classes of random matrices are provided in Section 2. The proof of the main result is presented in Section 3. Section 4 describes some applications of the results presented in Section 2. The results of Section 2 are also specialized in Section 5 to the case of correlated Wishart matrix. Conclusions are given in Section 6.

Throughout the paper we will generally refer to complex matrices, unless otherwise stated. 

\section{Main results}
\label{sec:mainresults}
The goal of the paper is to provide a unified framework for the derivation of marginal distributions, joint distribution of subset of eigenvalues, and moments for a general class of random matrices with arbitrary size.
To this aim,  we consider $M$ real ordered random variables ${\bm \lambda}\triangleq (\lambda_1,\lambda_2,\ldots,\lambda_{M})$ contained in the interval $(\alpha, \beta)$ with $\beta\geq \lambda_1 \geq \lambda_2 \geq\cdots \geq \lambda_{M} \geq \alpha$, whose ordered joint \ac{p.d.f.} is of the form
%
\begin{equation} \label{eq:genericpdfproddet}
f_{\bl{}} (\bm{x}) = K	\left|{\bm \Phi({\bm x})}\right| \cdot \left|{\bm \Psi}({\bm x})\right| \prod_{i=1}^{M} \xi(x_i) \,.
\end{equation}
%
In the previous equation ${\bm x}\triangleq(x_1,x_2,\ldots,x_{M})$, $K$ is a normalizing constant, $\xi(x)$ is an arbitrary function, ${\bm \Phi}\left({\bm x}\right)\in {\mathbb C}^{M \times M}$ is a matrix with elements $\phi_{i,j}=\phi_i(x_j)$, ${\bm \Psi}({\bm x}) \in {\mathbb C}^{N \times N}$ with $N \ge M$ is a matrix having elements
\begin{equation}
\label{eq:phi}
\Psi_{i,j}=\left\{\begin{array}{ll}\psi_i(x_j) &\,\,\, j=1,\ldots,M\\ 
\bar{\psi}_{i,j} &\,\,\, j=M+1,\ldots,N\end{array}\right.
\end{equation}
with $\phi_i(\cdot)$, $\psi_i(\cdot)$ arbitrary scalar functions and $\bar{\psi}_{i,j}$ arbitrary constants.
	
Expression \eqref{eq:genericpdfproddet} is of particular importance in multivariate statistical analysis as it represents the joint \ac{p.d.f.} of the eigenvalues of central Wishart or pseudo-Wishart matrices having covariance matrix with arbitrary multiplicity, noncentral Wishart with covariance matrix equal to the identity matrix,  multivariate beta (double Wishart) matrices, as well as the \ac{GUE}  \cite{ChiWinZan:J03,For:B10,Meh:04,ChiWinShi:J10,TraWid:09}. 
More precisely, some cases where the distribution of the eigenvalues is in the form \eqref{eq:genericpdfproddet} are the following. 

\def\nmin{M}
\def\Nmin{M}
\def\Nmax{N}
\def\nmax{N}
\def\x{x}
\def\X{\Theta}
\def\betareg{{\mathcal{B}}}
\def\I{{\mathcal{E}}}
\def\p{\mathsf{p}}
\def\n{\mathsf{n}}
\def\m{\mathsf{m}}

\begin{enumerate}
\item {Complex central uncorrelated Wishart matrices:} 
assume a Gaussian complex $\nmin \times n$ matrix ${\bm X}$ with \ac{i.i.d.} columns, each circularly symmetric with covariance ${\bm \Sigma= \bm I}$ (identity covariance), with $\EX{\bm X}= \bm 0$, and $n \geq \nmin$.  
%
%
 The joint \ac{p.d.f.} of the (real) ordered eigenvalues $\lambda_1 \geq \lambda_2 \ldots \geq\lambda_{\Nmin} \geq 0$ of the complex Wishart matrix ${\bm X \bm X}^H \sim {\bm {\mathcal{CW}}}_{\nmin}(n, {\bm I})$  is \cite{Ede:88,Joh:01,ChiWinZan:J03} 
\begin{equation}\label{eq:jpdfuncorrnodetcomplex}
f(x_{1}, \ldots, x_{\Nmin}) = 
K \left|{\bm V}({\bm x})\right|^2 \, \prod_{i=1}^{\Nmin}e^{-x_{i}}x_{i}^{n-\Nmin} 
\end{equation}
where $x_1 \geq x_2 \geq \cdots \geq x_{\Nmin} \geq 0$ and $K$ is a normalizing constant given by
$1/K =\prod_{i=1}^{\nmin} (n-i)! (\nmin-i)! \,.$
\item{Complex noncentral uncorrelated Wishart matrices: }
under the same hypothesis of (1), with  $\EX{\bm X}=\bm Q \ne \bm 0$, the joint \ac{p.d.f.} of the (real) ordered eigenvalues 
of the complex noncentral uncorrelated Wishart matrix ${\bm X \bm X}^H$ 
is given by \cite{JinMcKGaoCol:J08b,ZanChiWin:J09} 
\begin{equation}
\label{eq:jointPdfRicianArbitrary} f({x_1,\ldots,x_M})=K\; |{\bm W}({\bm x})|\cdot |{\bm \Upsilon }({\bm x})|\;
\prod_{i=1}^M x_i ^{n-M} e^{-x_i}
\end{equation}
where $K$ is a normalizing constant \cite{JinMcKGaoCol:J08b,ZanChiWin:J09}, the
$\ith{(i,j)}$ element of ${\bm W}({\bm x})$ is $x_j^{M-i}$, and
the $\ith{(i,j)}$ element of ${\bm \Upsilon}({\bm x})$, $\upsilon_{i,j}$,  is
\begin{equation}
\upsilon_{i,j}=\left\{
\begin{array}{ll}
\frac{{}_0 {\cal F}_1 (n-M+1,\mu_j x_i)}{(n-M)!} &\;\;\; j=1,\ldots,\nu\\
x_i^{M-j} &\;\;\; j=\nu+1,\ldots,M
\end{array}
\right.
\end{equation}
where $\nu \le M$ is the rank of ${\bm Q\; \bm Q}^H$,  $\mu_1 \ge \mu_2 \ge \cdots \ge \mu_{\nu}$ are the ordered eigenvalues of ${\bm Q\;\bm Q}^H$, and ${}_0 {\cal F}_1 (\cdot,\cdot)$ is the hypergeometric function.

\item {Hermitian Gaussian matrices (GUE):} 
the \ac{GUE} is composed of complex Hermitian random matrices with \ac{i.i.d.} ${\mathcal{CN}}(0, 1/2)$  entries on the upper-triangle, and ${\mathcal{N}}(0, 1/2)$ on the main diagonal.  
The joint distribution of the ordered eigenvalues can be written as \cite{TraWid:09}
\begin{equation} \label{eq:jpdfuncorrGUE}
f(x_1,\ldots, x_M) = K  
 \left|{\bm V}({\bm x})\right|^2 \prod_{i=1}^{M}e^{-x_{i}^2} 
\end{equation}
where $K=2^{M(M-1)/2} (\pi^{M/2} \prod_{i=1}^{M} \Gamma(i))^{-1}$ is a normalizing constant.

\item {Multivariate beta (double Wishart) matrices:} 
let ${\bm X, \bm Y} $ denote two independent complex Gaussian 
matrices, each constituted by zero mean \ac{i.i.d.} columns with common covariance. 
Multivariate analysis of variance (MANOVA) is based on the statistic of the eigenvalues of ${\bm{(A+B)}}^{-1} \bm{B}$ (beta matrix), where ${\bm A =\bm X \bm X}^H$ and ${\bm B =\bm Y \bm Y}^H$ are independent Wishart matrices.  These eigenvalues are clearly related to the eigenvalues of ${\bm{A}}^{-1} \bm{B}$ (double Wishart or multivariate beta). 
The joint distribution  of the $M$ non-null eigenvalues of a multivariate complex beta matrix in the null case can be written in the form \cite{Kha:64,Chi:17} 
\begin{equation}\label{eq:jpdfuncorrnodetcomplexMB}
f(x_{1}, \ldots, x_{M}) = K  \left|{\bm V}({\bm x})\right|^2  \, \prod_{i=1}^{M} x_{i}^{m} (1-x_{i})^n  
\end{equation}
where $m, n$ are related to the dimensions of the matrices ${\bm X, \bm Y} $, the eigenvalues are in the interval $(0,1)$ so that $1 > x_{1} \geq x_{2} \cdots \geq x_{M} > 0$, and $K$ is a normalizing constant  \cite{Kha:64,Chi:17}.
%
%
%
%

\item {Complex correlated Wishart matrices:} in Section ~\ref{sec:wishart_case} we will describe in detail the Wishart case with arbitrary correlation (including the spiked model), for which the joint distribution of the eigenvalues (see \eqref{eq:jpdfgeneralquadraticform} and \eqref{eq:jpdfcorrspiked}) has the form  \eqref{eq:genericpdfproddet}. 
\end{enumerate}

In Theorem \ref{th:det3proda} we first generalise the result \cite[Th. 2]{ChiWinZan:J03}  to cover the case of matrices having  different sizes. 

The main result of the paper is then Theorem \ref{th:marginal_joint}, which gives the marginal joint distribution of $L$ arbitrary ordered \acp{r.v.}. 

%
\begin{definition}\label{def:det3a}
For a rank $3$ tensor ${\bm A}=\left\{a_{i,j,k}\right\}_{i,j,k=1,\ldots,N}$, we define the pseudo-determinant operator ${\mathcal{T}}\left({\bm A}\right)$ as
\begin{equation}
	\label{eq:defdetdetA} {\mathcal{T}}\left({\bm A}\right) \teq
	\sum_{\bm\mu}\sgn(\bm\mu) \sum_{\bm\alpha}\sgn(\bm\alpha) \prod_{k=1}^{N}
	a_{\mu_k,\alpha_k,k}\,\,
\end{equation}
where the sums are over all possible permutations, $\bm\mu$ and $\bm\alpha$, of the integers $1,\ldots,N$.
It is worth noting that ${\mathcal{T}}\left({\bm A}\right)$ can be simplified as 
\begin{equation}
\label{eq:defdetdetAbis} {\mathcal{T}}\left({\bm A}\right) =
\sum_{\bm\mu}\sgn(\bm\mu) \det {\bm A}^{(\bm \mu)}
\end{equation}
where the $\ith{(i,j)}$ element of the matrix $ {\bm A}^{(\bm \mu)}$ is $a_{\mu_i,j,i}$. Therefore, the computational complexity of the pseudo-determinant operator is equivalent to that of $N!$ conventional determinant operators.
In particular, if the matrix $ {\bm A}^{(\bm \mu)}$ remains the same  for some permutations $\bm \mu$, the computational complexity of the operator ${\mathcal{T}}\left({\bm A}\right)$ can be strongly reduced. 
As a special case, when $a_{i,j,k}$ are independent of $k$, i.e., $a_{i,j,k}=a_{i,j,1}$, we have
\begin{eqnarray}
\label{eq:detdetAdegenappdx} {\mathcal{T}}\left({\bf A}\right) &=&
N! \det\left(\left\{a_{i,j,1}\right\}_{i,j=1\ldots,N}\right)
\end{eqnarray}
i.e., the pseudo-determinant ${\mathcal{T}}(\cdot)$ of the tensor $\left\{a_{i,j,k}\right\}_{i,j,k=1,\ldots,N}$ degenerates into $N!$ times the determinant of the matrix $\left\{a_{i,j,1}\right\}_{i,j=1\ldots,N}$.
\end{definition}

Using the above definition, we have the following theorem, which represents the generalization of \cite[Th. 2]{ChiWinZan:J03} when the integrand function is composed by the product of the determinants of two matrices having  different sizes.

\begin{theorem}\label{th:det3proda}
Given $M$ arbitrary functions $\xi_i(\cdot)$ and  two arbitrary matrices ${\bm \Phi}\left({\bm x}\right) \in {\mathbb C}^{M \times M}$, with $(i,j)$ elements $\Phi_i(x_j)$, and ${\bm \Psi}\left({\bm x}\right) \in {\mathbb C}^{N \times N}$, $N \ge  M$, with elements as in \eqref{eq:phi}, the following identity holds:
%
\begin{eqnarray}
	\label{eq:appdxprodunordtens}
	\int \ldots \int_{{\mathcal{D}}} \left| {\bm \Phi}\left({\bm x}\right)\right|
	\cdot \left| {\bm \Psi}\left({\bm x}\right)\right|
	\prod_{k=1}^{M} \, \xi_k(x_k) d{\bm x}
	={\mathcal{T}}\left({\bm C}\right)
\end{eqnarray}
where the multiple integral is over the hypercube 
$${\mathcal{D}}=\left\{a\leq x_1 \leq b, a\leq x_2 \leq b, \ldots, a\leq x_M \leq b \right\}$$  
$d{\bm x}=dx_1 \, dx_2 \cdots dx_M$  and the elements of the tensor ${\bm C}$ are
\begin{equation}
	\label{eq:ctensor}
	C_{i,j,k}=\left\{\begin{array}{ll}\displaystyle\int_{a}^b \Phi_i(x) \Psi_j(x) \xi_k(x)dx & \qquad i \le M, \,\,\,k \le M\\
	\displaystyle\int_{a}^b \Psi_j(x) \xi_k(x)dx & \qquad i>M,\,\,\,k \le M\\
	\displaystyle 0 & \qquad i < k, \,\,\,\,\,k>M \, \\
	\displaystyle\bar\Psi_{j,k} & \qquad i \ge k, \,\,\,\,\,k > M
	. \end{array}	
	\right. 
\end{equation}
%
\end{theorem}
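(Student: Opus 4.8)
The plan is to reduce the multiple integral to the tensor pseudo-determinant $\mathcal{T}(\bm C)$ by expanding both determinants $|\bm\Phi(\bm x)|$ and $|\bm\Psi(\bm x)|$ via the Leibniz formula and then integrating term by term. First I would write
\[
\left|\bm\Phi(\bm x)\right| = \sum_{\bm\mu} \sgn(\bm\mu)\prod_{k=1}^{M}\Phi_{\mu_k}(x_k),
\qquad
\left|\bm\Psi(\bm x)\right| = \sum_{\bm\alpha}\sgn(\bm\alpha)\prod_{j=1}^{N}\Psi_{\alpha_j,j},
\]
where $\bm\mu$ ranges over permutations of $\{1,\ldots,M\}$ and $\bm\alpha$ over permutations of $\{1,\ldots,N\}$. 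The subtlety is that the two determinants have different sizes: the $\bm\Phi$-product runs over the $M$ integration variables $x_1,\ldots,x_M$, whereas the $\bm\Psi$-product splits into a part over $x_1,\ldots,x_M$ (columns $j\le M$) and a part of pure constants $\bar\Psi_{\alpha_j,j}$ (columns $j>M$), which factors out of the integral. So after substituting both expansions, the integrand becomes a double sum over $(\bm\mu,\bm\alpha)$, and in each term the variables separate: $x_k$ appears only through $\Phi_{\mu_k}(x_k)\,\Psi_{\alpha_k,k}(x_k)\,\xi_k(x_k)$ for $k\le M$, with the leftover constant factor $\prod_{j=M+1}^{N}\bar\Psi_{\alpha_j,j}$.

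Next I would carry out the integration over the hypercube $\mathcal{D}$. Because the variables separate, $\int_{\mathcal D}(\cdots)\,d\bm x = \prod_{k=1}^{M}\int_a^b \Phi_{\mu_k}(x)\Psi_{\alpha_k}(x)\xi_k(x)\,dx$ — exactly the entries $C_{\mu_k,\alpha_k,k}$ in the first row of the case definition \eqref{eq:ctensor}. For the remaining indices $k=M+1,\ldots,N$ there is no integration variable; these contribute the constants $\bar\Psi_{\alpha_k,k}$, which matches $C_{\mu_k,\alpha_k,k}=\bar\Psi_{\alpha_k,k}$ provided $\mu_k\ge k$. This is where the bookkeeping has to be set up carefully: to make the double sum close up into $\mathcal{T}(\bm C)=\sum_{\bm\mu}\sgn(\bm\mu)\sum_{\bm\alpha}\sgn(\bm\alpha)\prod_{k=1}^{N}C_{\mu_k,\alpha_k,k}$, the permutation $\bm\mu$ of the pseudo-determinant must be promoted from a permutation of $\{1,\ldots,M\}$ to a permutation of $\{1,\ldots,N\}$. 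The device is to let $\bm\mu$ now range over all permutations of $\{1,\ldots,N\}$ but force, via the $i<k,\,k>M \Rightarrow C_{i,j,k}=0$ clause, every nonzero term to have $\mu_k\in\{M+1,\ldots,N\}$ acting as (essentially) the identity on the last $N-M$ slots, in just such a way that the sign $\sgn(\bm\mu)$ restricted to the first $M$ entries reproduces the original $M$-permutation sign and the $\bar\Psi$ block is read off correctly. The second row of \eqref{eq:ctensor}, $C_{i,j,k}=\int_a^b\Psi_j(x)\xi_k(x)\,dx$ for $i>M,\,k\le M$, similarly absorbs the case where $\mu_k>M$ while $k\le M$ — i.e. where the $\bm\Phi$-factor is absent because $\bm\Phi$ is only $M\times M$; here one extends $\Phi$ by declaring $\Phi_i\equiv 1$ for $i>M$, which is consistent with that entry.

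Concretely, the key steps in order are: (i) expand both determinants by Leibniz; (ii) pull the constant $\bar\Psi$-block out of the integral and separate variables; (iii) perform the $M$ single integrals, identifying them with the $k\le M$ entries of $\bm C$; (iv) re-index the permutation $\bm\mu$ from $S_M$ to $S_N$, using the zero entries ($i<k,\,k>M$) to kill all terms except those that extend a given $S_M$-permutation by the natural order-preserving map on $\{M+1,\ldots,N\}$, and check that $\sgn$ is preserved; (v) recognise the resulting double sum as $\mathcal{T}(\bm C)$ by \eqref{eq:defdetdetA}. The main obstacle is step (iv): verifying that the combinatorial extension of $\bm\mu$ introduces no spurious terms and no sign discrepancy — that the four-case structure of $C_{i,j,k}$ is precisely engineered so that $\sum_{\bm\mu\in S_N}$ collapses back onto $\sum_{\bm\mu\in S_M}$ with the correct $\bar\Psi_{j,k}$ constants appearing in the last $N-M$ factors. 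Everything else is routine multilinear expansion and Fubini. One can sanity-check the construction against the degenerate case: when $N=M$ the $\bm\Psi$ matrix has no constant columns, the third and fourth clauses of \eqref{eq:ctensor} are vacuous, and the statement reduces exactly to \cite[Th.~2]{ChiWinZan:J03}.
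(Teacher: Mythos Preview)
Your plan is correct and follows the same underlying strategy as the paper: expand both determinants via Leibniz, separate the variables over the hypercube, integrate term by term, and match against the definition of $\mathcal{T}(\bm C)$. The only difference is in how the size mismatch $M<N$ is bridged. The paper dispatches your ``main obstacle'' (step~(iv)) by a device that makes the bookkeeping automatic: it replaces $\bm\Phi(\bm x)$ at the outset by an $N\times N$ matrix $\hat{\bm\Phi}(\bm x)$ with the same determinant --- top-left $M\times M$ block equal to $\bm\Phi$, top-right block zero, bottom-left block all ones, bottom-right block lower-triangular with ones --- so that both determinants are $N\times N$ from the start and all four cases of \eqref{eq:ctensor} drop out directly as $\int_a^b \hat\Phi_i\Psi_j\xi_k\,dx$ for $k\le M$ and $\hat\Phi_{i,k}\,\bar\Psi_{j,k}$ for $k>M$. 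Your route instead computes the integral as a sum over $S_M\times S_N$ and then argues that $\mathcal{T}(\bm C)$, a priori a sum over $S_N\times S_N$, collapses back to it because the zero clause forces $\mu_k=k$ for all $k>M$; this is precisely the triangular structure the paper builds into $\hat{\bm\Phi}$. One small remark: once $\mu_k=k$ for $k>M$ is established, the case $\mu_k>M$ with $k\le M$ that you discuss simply cannot occur in any nonzero term, so the second row of \eqref{eq:ctensor} is in fact irrelevant to the value of $\mathcal{T}(\bm C)$ --- it appears in the paper only as a byproduct of the particular extension $\hat\Phi_i\equiv 1$ for $i>M$.
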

\begin{proof}
Since the integrand function in \eqref{eq:appdxprodunordtens} does not depend on the specific values of the matrices but only on  their determinants, ${\bm \Phi}({\bm x})$ in \eqref{eq:appdxprodunordtens} can be replaced by an arbitrary  matrix, say ${\bm{\hat{ \Phi}}}\left({\bm x}\right)$, having the same determinant.  
A possible choice for the elements of ${\bm{\hat{ \Phi}}}({\bm x}) \in {\mathbb C}^{N \times N}$  is the following
\begin{equation}
\label{eq:matrixext}
\hat \Phi_{i,j}=\left\{\begin{array}{ll}
\hat \Phi_i(x_j)=\Phi_{i}(x_j) &  \qquad i \le M,\,\,\,j \le M\\
\hat \Phi_i(x_j)= 1 & \qquad i > M,\,\,\,j \le M\\
1 & \qquad i > M,\,\,\, M < j \le i\\
0 & \qquad \text{otherwise}.\\
\end{array}\right.
\end{equation}

Applying the definition \eqref{eq:matrixext}, the integral in the \ac{LHS} of \eqref{eq:appdxprodunordtens} becomes
\begin{align}
\label{eq:proof1dim1}
\int \ldots &\int_{{\mathcal{D}}} \left| {\bm{\hat{ \Phi}}}\left({\bm x}\right)\right| \cdot \left| {\bm \Psi}\left({\bm  x}\right)\right| \prod_{k=1}^M \xi_k(x_k) d{\bm x}\nonumber\\
= &\int \ldots \int_{{\mathcal{D}}} \; \left[ \sum_{\bm\sigma} \sgn\left(\bm\sigma\right) \prod_{l=1}^N \hat \Phi_{{\sigma_l},l} \right]\,\cdot \left[ \sum_{\bm \mu} \sgn\left(\bm \mu\right) \prod_{k=1}^N \Psi_{\mu_k,k} \right] \prod_{k=1}^M \xi_k(x_k) d{\bm x} \nonumber \\
= & \sum_{\bm\mu} \sgn\left(\bm\mu\right) \sum_{\bm\sigma} \sgn\left(\bm\sigma\right)\prod_{k=M+1}^N \hat \Phi_{\sigma_k,k}\,\bar{\Psi}_{\mu_k,k} \int \ldots \int_{{\mathcal{D}}} \; \prod_{k=1}^M\hat \Phi_{\sigma_k}\left(x_k\right)\Psi_{\mu_k}\left(x_k\right) \xi_k(x_k)  \; d{\bm x} \nonumber\\
= & \sum_{\bm\mu} \sgn\left(\bm\mu\right) \sum_{\bm\sigma} \sgn\left(\bm\sigma\right)\prod_{k=M+1}^N \hat \Phi_{\sigma_k,k}\,\bar{\Psi}_{\mu_k,k} \prod_{k=1}^M \int_a^b \;\hat \Phi_{\sigma_k}\left(x\right)
\Psi_{\mu_k}\left(x\right) \xi_k(x) dx \nonumber\\
= & \sum_{\bm\mu} \sgn\left(\bm\mu\right) \sum_{\bm\sigma} \sgn\left(\bm\sigma\right)\prod_{k=1}^N C_{\sigma_k,\mu_k,k} ={\cal T}\left({\bm C}\right)
\end{align}
where the elements of ${\bm C}$ are defined in \eqref{eq:ctensor}.
\end{proof}
The following Theorem gives the joint distribution of an arbitrary subset of eigenvalues. 
\begin{theorem}\label{th:marginal_joint}
The joint \ac{p.d.f.} of $L$ arbitrary ordered eigenvalues $\lambda_{i_1},\lambda_{i_2}, \ldots,\lambda_{i_L}$, with $i_1<i_2<\ldots <i_L$ with joint distribution as in \eqref{eq:genericpdfproddet} is given by
\begin{equation}\label{eq:fxi1xiLgen}
f_{\lambda_{i_1},\lambda_{i_2}, \ldots,\lambda_{i_L}}(x_{i_1},x_{i_2}, \ldots,x_{i_L})= c({i_1,i_2,\ldots,i_L}) \, K \, {\mathcal{T}}\left({\bm A}\right)
\end{equation}
where $c({i_1,i_2,\ldots,i_L})=1/\prod_{\ell=1}^{L+1}(i_\ell-i_{\ell-1}-1)!$ and the $N\times N \times N$ tensor ${\bm A}$ has elements
\begin{equation} \label{eq:aijkgeneral}
a_{i,j,k} \teq\begin{cases}
\displaystyle\int_{\alpha}^{\beta} \varsigma_i(x) \Psi_j(x)\eta_k(x) dx & k \le M\\
\displaystyle 0 & k> M,  \quad i < k \\
\displaystyle\bar{\Psi}_{j,k} & k > M, \quad i\ge k. 
\end{cases}
\end{equation}
The function $\eta_k(x)$ in the previous equation is
\begin{equation} \label{eq:etak}
\eta_k(x) \teq \begin{cases}
\delta(x-x_k)  &  \quad k\in \left\{i_1, i_2, \ldots, i_L \right\}\\
r(x; x_{i_{\varepsilon(k)}},x_{i_{\varepsilon(k)-1}})  &  \quad  \text{elsewhere}\\
\end{cases}
\end{equation}
and the segment indicator $\varepsilon(k)$ is defined as the unique integer such that $i_{\varepsilon(k)-1} \leq k < i_{\varepsilon(k)}-1$,  with the convention $i_0\teq 0 , x_0\teq \beta , i_{L+1}=M+1 , x_{L+1}=\alpha$.
\end{theorem}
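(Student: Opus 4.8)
The plan is to obtain the joint marginal \ac{p.d.f.} of $\lambda_{i_1},\ldots,\lambda_{i_L}$ by integrating the full ordered joint \ac{p.d.f.} \eqref{eq:genericpdfproddet} over the remaining $M-L$ eigenvalues, while keeping the ordering constraints, and then recognizing the resulting multiple integral as a pseudo-determinant of the type appearing in Theorem~\ref{th:det3proda}. Concretely, I would start from
$$
f_{\lambda_{i_1},\ldots,\lambda_{i_L}}(x_{i_1},\ldots,x_{i_L})
= K \int_{\mathcal{R}} \left|{\bm \Phi}({\bm x})\right|\,\left|{\bm \Psi}({\bm x})\right|\prod_{i=1}^{M}\xi(x_i)\,\prod_{k\notin\{i_1,\ldots,i_L\}} dx_k ,
$$
where the integration region $\mathcal{R}$ enforces $\beta\ge x_1\ge x_2\ge\cdots\ge x_M\ge\alpha$. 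The region $\mathcal{R}$ factorizes into $L+1$ independent blocks: within the $\varepsilon$-th block, the $(i_\varepsilon - i_{\varepsilon-1}-1)$ free variables $x_{i_{\varepsilon-1}+1},\ldots,x_{i_\varepsilon-1}$ must lie, in decreasing order, in the interval $(x_{i_\varepsilon},x_{i_{\varepsilon-1}})$ (using the boundary conventions $x_0=\beta$, $x_{L+1}=\alpha$). This is exactly where the constants $c(i_1,\ldots,i_L)=1/\prod_{\ell=1}^{L+1}(i_\ell-i_{\ell-1}-1)!$ and the indicator functions $r(x;x_{i_{\varepsilon(k)}},x_{i_{\varepsilon(k)-1}})$ in \eqref{eq:etak} will come from.

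The next step is to remove the ordering restriction inside each block by symmetrization. Because both determinants $\left|{\bm \Phi}({\bm x})\right|$ and $\left|{\bm \Psi}({\bm x})\right|$ are antisymmetric in any two of the variables $x_1,\ldots,x_M$, their product is symmetric, and likewise $\prod_i \xi(x_i)$ is symmetric; hence integrating over the ordered simplex in each block of free variables equals $1/(i_\varepsilon-i_{\varepsilon-1}-1)!$ times the integral over the full box $[\,x_{i_\varepsilon},x_{i_{\varepsilon-1}}\,]^{\,i_\varepsilon-i_{\varepsilon-1}-1}$. After this symmetrization the integrand is of the form $\left|{\bm \Phi}({\bm x})\right|\left|{\bm \Psi}({\bm x})\right|\prod_{k=1}^{M}\eta_k(x_k)\xi(x_k)$ integrated over each $x_k$, $k=1,\ldots,M$, over all of $(\alpha,\beta)$: for the "fixed" indices $k\in\{i_1,\ldots,i_L\}$ the factor $\eta_k(x)=\delta(x-x_k)$ simply re-pins $x_k$ to its prescribed value, and for the "free" indices $\eta_k$ is the interval indicator just described. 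In other words, absorbing $\xi$ into the role of "$\xi_k$" and using $\varsigma_i$, $\Psi_j$ for the rows of the two determinants, the integral is precisely an instance of the \ac{LHS} of \eqref{eq:appdxprodunordtens} with the functions $\xi_k(x)$ there replaced by $\eta_k(x)\xi(x)$.

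Applying Theorem~\ref{th:det3proda} with this identification then yields $K\,c(i_1,\ldots,i_L)\,{\mathcal{T}}({\bm A})$, where the tensor entries $a_{i,j,k}$ are obtained from \eqref{eq:ctensor} by substituting $\xi_k(x)\mapsto\eta_k(x)\xi(x)$; carrying out that substitution reproduces exactly \eqref{eq:aijkgeneral} once one folds the scalar $\xi$ into $\varsigma_i$ (so that $\varsigma_i(x)\Psi_j(x)\eta_k(x)$ stands for $\Phi_i(x)\Psi_j(x)\xi(x)\eta_k(x)$), and for the "fixed" rows the $\delta$-function in $\eta_k$ collapses $\int_\alpha^\beta \varsigma_i(x)\Psi_j(x)\delta(x-x_k)\,dx$ to the sampled value $\varsigma_i(x_k)\Psi_j(x_k)$. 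I expect the main obstacle to be bookkeeping rather than conceptual: one must verify carefully that the block decomposition of $\mathcal{R}$ is correct at the boundaries (the half-open condition $i_{\varepsilon(k)-1}\le k<i_{\varepsilon(k)}-1$ and the conventions $i_0=0$, $i_{L+1}=M+1$), that the symmetrization factor is counted exactly once per block, and that the $\delta$-function rows slot into the $i\le M$ case of \eqref{eq:ctensor} consistently with the $i>M$ border rows inherited from the structure of ${\bm \Psi}$; a secondary subtlety is justifying the interchange of the finite integrals with the permutation sums, which is immediate here since everything is a finite sum of products of one-dimensional integrals over a bounded region.
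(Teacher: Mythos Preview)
Your proposal is correct and follows essentially the same approach as the paper: symmetrize the ordered integral within each of the $L+1$ blocks determined by the fixed indices to produce the combinatorial factor $c(i_1,\ldots,i_L)$, rewrite the resulting hypercubical integral using Dirac deltas for the fixed indices and indicator functions for the free ones, and then invoke Theorem~\ref{th:det3proda} with $\xi_k(x)=\eta_k(x)\xi(x)$. The only cosmetic difference is that the paper first spells out the cases $L=1$ and $L=2$ as separate lemmas (Lemmas~\ref{th:pdfith} and~\ref{th:jointpdfellands}) before asserting the general $L$ by the same argument, whereas you go directly to general $L$.
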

\begin{proof} See Section \ref{sec:margdisteigen}. 
\end{proof}


\section{Proof of Theorem \ref{th:marginal_joint}} \label{sec:margdisteigen}
In this section we will prove Theorem \ref{th:marginal_joint}, by first deriving 
 the \ac{p.d.f.} for one eigenvalue, then for two arbitrary eigenvalues, and finally for the general case of $L$
arbitrary eigenvalues.

%
The marginal distribution of one ordered eigenvalue is obtained in the following Lemma.
\label{sec:submargdistitheigenord}
\begin{lemma}\label{th:pdfith} The \ac{p.d.f.} of the $\ith{\ell}$ ordered eigenvalue
is given by
\begin{equation}\label{eq:fxi}
f_{\lambda_\ell}(x_{\ell})=c(\ell) K \, {\mathcal{T}}\left({\bm A}\right)
\end{equation}
where
\begin{equation}
\label{eq:citheorem2}
c(\ell)\triangleq \frac{1}{(\ell-1)!(M-i)!}
\end{equation}
and the $N \times N \times N$ tensor
${\bm A=A}(x_{\ell})$ has elements
\begin{equation} \label{eq:aijkfxi}
  a_{i,j,k} \teq \begin{cases}
                   \displaystyle \int_{x_\ell}^{\beta} \varsigma_i(x) \Psi_j(x) dx &  k < \ell  \le M\\
                   \displaystyle \varsigma_i(x_\ell)  \Psi_j(x_\ell)                      &  k=\ell \le M \\
                   \displaystyle \int_{\alpha}^{x_\ell} \varsigma_i(x) \Psi_j(x) dx  &  \ell < k \le M\\
                   \displaystyle 0 & k> M,  \quad i < k \\
                   \displaystyle \bar{\Psi}_{j,k} & k > M, \quad i\ge k  
                    
                \end{cases}
\end{equation}
where
\begin{equation}
\label{eq:varsigma}\varsigma_i(x)\triangleq
\begin{cases}
\Phi_i(x) \xi(x) & i \le M\\
\xi(x) & i > M.
\end{cases}
\end{equation}
\end{lemma}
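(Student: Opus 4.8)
The plan is to obtain the marginal \ac{p.d.f.} of $\lambda_\ell$ from the ordered joint \ac{p.d.f.} \eqref{eq:genericpdfproddet} by integrating out the other $M-1$ eigenvalues over the appropriate ordered region. Concretely, I would write
\[
f_{\lambda_\ell}(x_\ell)=K\int_{\mathcal{D}_\ell}\left|{\bm\Phi}({\bm x})\right|\cdot\left|{\bm\Psi}({\bm x})\right|\prod_{i=1}^M\xi(x_i)\,\prod_{i\ne\ell}dx_i,
\]
where $\mathcal{D}_\ell=\{\beta\ge x_1\ge\cdots\ge x_{\ell-1}\ge x_\ell\ge x_{\ell+1}\ge\cdots\ge x_M\ge\alpha\}$ with $x_\ell$ held fixed. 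The integrand is symmetric under permutations of $x_1,\ldots,x_{\ell-1}$ among themselves and of $x_{\ell+1},\ldots,x_M$ among themselves (a product of two determinants times a symmetric factor is symmetric), so the integral over the ordered simplex equals $1/((\ell-1)!(M-\ell)!)$ times the integral over the full hypercube where the first $\ell-1$ variables range freely in $(x_\ell,\beta)$ and the last $M-\ell$ range freely in $(\alpha,x_\ell)$. This is exactly where the constant $c(\ell)=1/((\ell-1)!(M-\ell)!)$ in \eqref{eq:citheorem2} comes from (the ``$M-i$'' there should read $M-\ell$).

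Next I would fold the coordinate $x_\ell$ itself into the picture by using a Dirac delta: the $k=\ell$ column/slice corresponds to evaluating at the fixed point $x_\ell$, which is the $\varsigma_i(x_\ell)\Psi_j(x_\ell)$ entry. With the definition $\varsigma_i(x)$ in \eqref{eq:varsigma}, absorbing $\xi(x)$ into $\Phi_i$ for $i\le M$ and treating the extra rows $i>M$ (with $\varsigma_i=\xi$) as the trivial rows coming from the $\hat{\bm\Phi}$ extension in \eqref{eq:matrixext}, the integrand takes precisely the form covered by Theorem~\ref{th:det3proda}: a product $|\hat{\bm\Phi}({\bm x})|\cdot|{\bm\Psi}({\bm x})|$ times a product of per-variable weight functions $\eta_k(x_k)$, where $\eta_k$ equals $r(x;x_\ell,\beta)$ for $k<\ell$, the point mass $\delta(x-x_\ell)$ for $k=\ell$, and $r(x;\alpha,x_\ell)$ for $k>\ell$. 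Applying \eqref{eq:appdxprodunordtens}–\eqref{eq:ctensor} with these weights then produces the tensor ${\bm A}$ whose entries are exactly \eqref{eq:aijkfxi}: the three cases for $k\le M$ are the integrals of $\varsigma_i\Psi_j$ against the three possible $\eta_k$'s, and the $k>M$ cases ($0$ when $i<k$, $\bar\Psi_{j,k}$ when $i\ge k$) are inherited verbatim from the $\bar\Psi$ block of ${\bm C}$ in Theorem~\ref{th:det3proda}. Hence $f_{\lambda_\ell}(x_\ell)=c(\ell)K\,{\mathcal{T}}({\bm A})$.

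The main obstacle I anticipate is bookkeeping rather than any deep difficulty: one must verify that replacing ${\bm\Phi}\in{\mathbb C}^{M\times M}$ by the $N\times N$ extension $\hat{\bm\Phi}$ of \eqref{eq:matrixext} leaves the determinant (hence the integral) unchanged — this follows by Laplace expansion along the trivial last $N-M$ columns — and that the per-variable weights are correctly identified so that the integral genuinely splits as a product of $M$ single integrals (using that the $x_i$ for $i<\ell$ and $i>\ell$ decouple once the ordering constraint is handled by the combinatorial factor). A secondary point to check carefully is that the delta slice at $k=\ell$ does not need an integration region and so contributes the pointwise product $\varsigma_i(x_\ell)\Psi_j(x_\ell)$, consistent with reading \eqref{eq:etak} at $k=i_1=\ell$ in the $L=1$ specialization of Theorem~\ref{th:marginal_joint}. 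Once these are in place the result is immediate, and it serves as the base case for the induction on $L$ that proves the general Theorem~\ref{th:marginal_joint}.
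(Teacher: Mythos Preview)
Your proposal is correct and follows essentially the same route as the paper: symmetrize the ordered integral into a hypercube integral (picking up the factor $c(\ell)$), encode the three integration regimes via the weights $r(x;x_\ell,\beta)$, $\delta(x-x_\ell)$, $r(x;\alpha,x_\ell)$, and then invoke Theorem~\ref{th:det3proda} to obtain the tensor \eqref{eq:aijkfxi}. You also correctly flag the typo in \eqref{eq:citheorem2} ($M-i$ should be $M-\ell$).
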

\begin{proof}
For the marginal distribution of the $\ith{\ell}$ ordered eigenvalue we
have to evaluate
\begin{equation}\label{eq:ithpdf0}
f_{\lambda_\ell}(x_\ell)=\int\cdots \int_{{\mathcal{D}(x_\ell)}}
d{\bm\lambda}^{(\ell)} 
f({\bm\lambda}^{(\ell)},x_\ell)
\end{equation}
where ${\bm\lambda}^{(\ell)}\teq (\lambda_1, \ldots, \lambda_{\ell-1},
\lambda_{\ell+1}, \ldots, \lambda_{M})$ is the vector ${\bm\lambda}$
excluding $\lambda_\ell$, and
$${\mathcal{D}(x_\ell)}=\left\{{\bm\lambda}^{(\ell)}: 
\lambda_1 \geq \cdots \lambda_{\ell-1} \geq x_{\ell} \geq \lambda_{\ell+1}
\geq \ldots \geq \lambda_{M}  \right\}.$$
The previous expression can be rewritten as
\begin{align}\label{eq:ithpdf1}
f_{\lambda_\ell}(x_\ell)&=\underbrace{\int_{x_\ell}^{\beta}d\lambda_{\ell-1}\cdots
\int_{\lambda_3}^{\beta}d\lambda_{2}
\int_{\lambda_2}^{\beta}d\lambda_{1}}_{\beta>\lambda_1\geq\lambda_2\geq\cdots\geq
x_\ell}     \nonumber \\
 & \times \underbrace{ \int_{\alpha}^{x_{\ell}}d\lambda_{\ell+1} \cdots
 \int_{\alpha}^{\lambda_{M-2}}d\lambda_{M-1}
 \int_{\alpha}^{\lambda_{M-1}}d\lambda_{M}}_{x_\ell\geq\lambda_{\ell+1}\geq\cdots\geq\lambda_{M} > \alpha } f({\bm\lambda}^{(\ell)},x_\ell).
\end{align}
Now, due to the symmetry of the function in
\eqref{eq:genericpdfproddet} we can also write
\begin{eqnarray*}\label{eq:ithpdf2}
f_{\lambda_\ell}(x_\ell)= c(\ell)\int_{x_\ell}^{\beta}\cdots
\int_{x_\ell}^{\beta}d\lambda_{1}\cdots d\lambda_{\ell-1}
 \int_{\alpha}^{x_\ell}\cdots  \int_{\alpha}^{x_\ell}d\lambda_{\ell+1}\cdots
d\lambda_{M} f({\bm\lambda}^{(\ell)},x_\ell)
\end{eqnarray*}
where $c(\ell)$ is defined in \eqref{eq:citheorem2}. To be able to use the ${\mathcal{T}}(\cdot)$ operator we must integrate $f({\bm \lambda})$ with respect to all variables (this is hypercubical integration domain). To this aim, we use the indicator function $r(x;a,b)$ defined in the introduction, 
%
that, together with the Dirac's delta function $\delta(.)$, allows
us to write
\begin{align}\label{eq:ithpdf3}  f_{\lambda_\ell}(x_\ell)&=
c(\ell) \int_{\alpha}^{\beta}\cdots
 \int_{\alpha}^{\beta}r(\lambda_1;x_\ell,\beta)\cdots r(\lambda_{\ell-1};x_\ell,\beta) \nonumber \\
 &\times \delta(\lambda_\ell-x_\ell)r(\lambda_{\ell+1};\alpha,x_\ell)\cdots
r(\lambda_{ M};\alpha,x_\ell)f({\bm \lambda})\,d{\bm \lambda}.
\end{align}
Then, by using Theorem~\ref{th:det3proda} in \eqref{eq:ithpdf3} with  $a=\alpha$, $b=\beta$, and
\begin{equation}
\xi_k(x)=\begin{cases}r(x; x_\ell,\beta)\,\xi(x) &  k < \ell\\
\delta(x-x_\ell)\,\xi(x) & k=\ell\\
r(x; \alpha, x_\ell)\,\xi(x) & \ell+1 \le k < M\\
 \end{cases}
\end{equation} we
get \eqref{eq:fxi} and \eqref{eq:aijkfxi}.
\end{proof}

The marginal joint distribution of any two ordered eigenvalues is given in the following Lemma.
\label{sec:submargdistithjtheigenord}
\begin{lemma} \label{th:jointpdfellands} The joint \ac{p.d.f.} of the $\ith{\ell}$ and $\ith{s}$ ordered
eigenvalues, $s>\ell$, is given by
\begin{equation}\label{eq:fxixj}
f_{\lambda_\ell,\lambda_s}(x_\ell,x_s)=c({\ell,s}) K {\mathcal{T}}\left({\bm A}\right)
\end{equation}
where
\begin{equation}
\label{eq:cij}
c({\ell,s})\triangleq \frac{1}{(\ell-1)!\,(s-\ell-1)!\,(M-s)!}
\end{equation}
the $N \times N \times N$ tensor ${\bm A}$ has elements
\begin{equation} \label{eq:aijk_th3}
  a_{i,j,k} \teq \begin{cases}
                    \displaystyle\int_{x_\ell}^{\beta} \varsigma_i(x) \Psi_j(x) dx  &  k < \ell  \le M\\
                    \displaystyle\varsigma_i(x_\ell)  \Psi_j(x_\ell)                       &  k=\ell \le M \\
                    \displaystyle\int_{x_s}^{x_\ell} \varsigma_i(x) \Psi_j(x) dx  &  \ell < k < s \le M\\
                    \displaystyle\varsigma_i(x_s) \Psi_j(x_s)                       &  k=s \le M \\
                    \displaystyle\int_{\alpha}^{x_s} \varsigma_i(x) \Psi_j(x) dx  &  s < k \le  M\\         
                    \displaystyle 0 & k> M,  \quad i < k \\
                    \displaystyle\bar{\Psi}_{j,k} & k > M, \quad i\ge k  
                \end{cases}
\end{equation}
and $\varsigma_i$ is defined in \eqref{eq:varsigma}.
\end{lemma}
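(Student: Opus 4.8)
The plan is to mirror the structure of Lemma~\ref{th:pdfith}, which handles one ordered eigenvalue, and promote the argument to two eigenvalues $\lambda_\ell$ and $\lambda_s$ with $s>\ell$. First I would start from the definition of the marginal joint \ac{p.d.f.} as the integral of \eqref{eq:genericpdfproddet} over all the remaining $M-2$ ordered variables, i.e. over the domain where $\beta \geq \lambda_1 \geq \cdots \geq \lambda_{\ell-1} \geq x_\ell \geq \lambda_{\ell+1} \geq \cdots \geq \lambda_{s-1} \geq x_s \geq \lambda_{s+1} \geq \cdots \geq \lambda_M \geq \alpha$. The three ``blocks'' of free variables — indices $1,\dots,\ell-1$ living in $(x_\ell,\beta)$, indices $\ell+1,\dots,s-1$ living in $(x_s,x_\ell)$, and indices $s+1,\dots,M$ living in $(\alpha,x_s)$ — are each internally ordered, so I would invoke the full symmetry of the integrand in \eqref{eq:genericpdfproddet} (it is a product of two determinants times $\prod_i \xi(x_i)$, hence symmetric under joint permutation of the arguments) to replace each ordered simplex by a full hypercube at the cost of the multinomial factors $(\ell-1)!$, $(s-\ell-1)!$ and $(M-s)!$, which combine into $c(\ell,s)$ as in \eqref{eq:cij}.

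Next I would rewrite the resulting hypercubical integral over $(\alpha,\beta)^M$ by inserting indicator functions $r(\lambda_k;x_\ell,\beta)$, $r(\lambda_k;x_s,x_\ell)$, $r(\lambda_k;\alpha,x_s)$ on the three blocks of free variables and two Dirac deltas $\delta(\lambda_\ell-x_\ell)$ and $\delta(\lambda_s-x_s)$ to pin the two eigenvalues we are conditioning on. This puts the expression exactly in the form to which Theorem~\ref{th:det3proda} applies, with $a=\alpha$, $b=\beta$, and the per-variable weight
\begin{equation*}
\xi_k(x)=\begin{cases}
r(x;x_\ell,\beta)\,\xi(x) & k<\ell\\
\delta(x-x_\ell)\,\xi(x) & k=\ell\\
r(x;x_s,x_\ell)\,\xi(x) & \ell<k<s\\
\delta(x-x_s)\,\xi(x) & k=s\\
r(x;\alpha,x_s)\,\xi(x) & s<k\le M.
\end{cases}
\end{equation*}
Applying Theorem~\ref{th:det3proda} then directly produces ${\mathcal{T}}({\bm C})$, and I would identify the tensor ${\bm C}$ of \eqref{eq:ctensor} with the tensor ${\bm A}$ of \eqref{eq:aijk_th3}: for $i\le M$ the entry $C_{i,j,k}=\int_\alpha^\beta \Phi_i(x)\Psi_j(x)\xi_k(x)\,dx$ collapses, according to the five cases for $k$, to an integral of $\varsigma_i(x)\Psi_j(x)$ over the appropriate sub-interval (or to a point evaluation $\varsigma_i(x_\ell)\Psi_j(x_\ell)$ or $\varsigma_i(x_s)\Psi_j(x_s)$ when a delta is present), using the abbreviation $\varsigma_i(x)=\Phi_i(x)\xi(x)$ of \eqref{eq:varsigma}; for $i>M$ the same holds with $\Phi_i$ dropped, again matching \eqref{eq:varsigma}; and the rows $k>M$ reproduce the $0$ / $\bar\Psi_{j,k}$ structure unchanged. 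Collecting the normalizing constant $K$ from \eqref{eq:genericpdfproddet} and the combinatorial factor $c(\ell,s)$ yields \eqref{eq:fxixj}.

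The argument is essentially a bookkeeping extension of the one-eigenvalue case, so there is no deep obstacle; the main point requiring care is the correct splitting of the $M-2$ free integration variables into the three contiguous index blocks and verifying that each block's free-variable ordering contributes precisely its factorial — equivalently, checking the nesting of the limits in the analogue of \eqref{eq:ithpdf1} so that $c(\ell,s)$ comes out as in \eqref{eq:cij} rather than with an off-by-one error in one of the three segments. A secondary bit of care is ensuring the two deltas sit at positions $k=\ell$ and $k=s$ in the ordered list of weights $\xi_k$ and that the interval endpoints used by the neighbouring $r(\cdot)$ factors are $x_\ell$ and $x_s$ in the right order ($x_s \le x_\ell$), which is what makes the middle-block integrals $\int_{x_s}^{x_\ell}$ well-defined with the natural orientation. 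Once these are pinned down, the result follows from Theorem~\ref{th:det3proda} verbatim, and indeed the forthcoming $L$-eigenvalue Theorem~\ref{th:marginal_joint} is just the same scheme with $L+1$ index blocks and $L$ deltas, so this Lemma also serves as the inductive template for that proof.
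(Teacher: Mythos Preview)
Your proposal is correct and follows essentially the same approach as the paper: the paper's proof also proceeds by analogy with Lemma~\ref{th:pdfith}, writes the integral with the three blocks of indicator functions and the two Dirac deltas exactly as in your displayed $\xi_k(x)$, and then invokes Theorem~\ref{th:det3proda} to obtain \eqref{eq:aijk_th3}. Your account is in fact slightly more explicit than the paper's in spelling out the identification of the tensor ${\bm C}$ from \eqref{eq:ctensor} with ${\bm A}$ via the definition of $\varsigma_i$.
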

\begin{proof} For the proof we proceed as for
Lemma~\ref{th:pdfith} and obtain
%
\begin{align}\label{eq:ithpdf_th3}
f_{\lambda_\ell,\lambda_s}(x_\ell,x_s)&= c(\ell,s) \int_{\alpha}^{\beta}\cdots
 \displaystyle \int_{\alpha}^{\beta}r(\lambda_1;x_\ell,\beta)\cdots r(\lambda_{\ell-1};x_\ell,\beta) \nonumber \\
 &\times \delta(\lambda_\ell-x_\ell)\, r(\lambda_{\ell+1}; x_s,x_\ell)\cdots r(\lambda_{s-1}; x_s,x_\ell)\, \delta(\lambda_s-x_s)\nonumber\\
 & \times r(\lambda_{s+1};\alpha,x_s)\cdots r(\lambda_{ \nmin};\alpha,x_s)f({\bm \lambda})\,d{\bm \lambda}\emph{}
\end{align}
where $c(\ell,s)$ is defined in \eqref{eq:cij}.
Using Theorem \ref{th:det3proda} 
with
\begin{equation}
\xi_k(x)=\begin{cases}r(x; x_\ell,\beta)\,\xi(x) &  k < \ell\\
\delta(x-x_\ell)\,\xi(x) & k=\ell\\
r(x; x_{s},x_{\ell}) \,\xi(x) & \ell+1 \le k < s\\
\delta(x-x_s)\,\xi(x) & k=s\\
r(x; \alpha, x_s)\,\xi(x) & s < k \le \nmin\\
 \end{cases}
 \end{equation}
 we finally obtain \eqref{eq:aijk_th3}.
\end{proof}

For the proof of the general case of Theorem \ref{th:marginal_joint}, that is, the marginal joint distribution of $L$ arbitrary ordered eigenvalues, 
we follow the same approach used for the two previous Lemmas, generalizing \eqref{eq:ithpdf_th3} to the case of $L$ variables kept fixed and integrating over the remaining $M-L$ ones. In this way we obtain \eqref{eq:fxi1xiLgen}, \eqref{eq:aijkgeneral}, and \eqref{eq:etak}
\section{Some applications of Theorems 2.1 and 2.2}
\label{sec:subexpi1genord}
\subsection{Expected value of a function of the $\ith{\ell}$ ordered eigenvalue}
\begin{theorem}
The expected value of an arbitrary function $\varphi (\cdot)$ of the $\ith{\ell}$ ordered eigenvalue is given by

\begin{equation}\label{eq:expgxiord}
\EX{\varphi (\lambda_{\ell})}=c({\ell}) K \, {\mathcal{I}}
\end{equation}
where
\begin{eqnarray} \label{eq:mathcalI}
{\mathcal{I}}
 &\teq& \int_\alpha^{\beta}\varphi (x_\ell)\, {\mathcal{T}}\left({\bm A}(x_\ell)\right)
 \,dx_\ell \nonumber\\ &=&
\sum_{\bm \mu}\sgn(\bm \mu) \sum_{\bm \alpha}\sgn(\bm \alpha)
\int_\alpha^{\beta}\varphi (x_\ell)\prod_{k=1}^{N}
 a_{\mu_k,\alpha_k,k}(x_\ell)\,dx_\ell
 \end{eqnarray}
and $a_{\mu_k,\alpha_k,k}(x_\ell)$ are defined in \eqref{eq:aijkfxi}.
\end{theorem}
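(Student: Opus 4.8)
The plan is to obtain the expectation directly from the marginal density of the $\ith{\ell}$ ordered eigenvalue already established in Lemma~\ref{th:pdfith}. By definition,
\[
\EX{\varphi(\lambda_\ell)} = \int_\alpha^\beta \varphi(x_\ell)\, f_{\lambda_\ell}(x_\ell)\, dx_\ell,
\]
so I would simply substitute the closed form \eqref{eq:fxi}, namely $f_{\lambda_\ell}(x_\ell) = c(\ell)\, K\, {\mathcal{T}}\!\left({\bm A}(x_\ell)\right)$, and pull the constants $c(\ell)$ and $K$ outside the integral. This immediately yields $\EX{\varphi(\lambda_\ell)} = c(\ell)\, K\, {\mathcal{I}}$ with ${\mathcal{I}} \teq \int_\alpha^\beta \varphi(x_\ell)\, {\mathcal{T}}\!\left({\bm A}(x_\ell)\right) dx_\ell$, which is the first line of \eqref{eq:mathcalI}.

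The second line of \eqref{eq:mathcalI} is then just a matter of expanding the pseudo-determinant operator using its definition \eqref{eq:defdetdetA}: write ${\mathcal{T}}\!\left({\bm A}(x_\ell)\right) = \sum_{\bm\mu}\sgn(\bm\mu)\sum_{\bm\alpha}\sgn(\bm\alpha)\prod_{k=1}^N a_{\mu_k,\alpha_k,k}(x_\ell)$, and interchange the (finite) sums over permutations with the integral over $x_\ell$. Since there are only finitely many permutation pairs $(\bm\mu,\bm\alpha)$, this interchange is elementary and carries the $\varphi(x_\ell)$ factor inside, giving exactly the stated expression. This also clarifies the role of the argument $(x_\ell)$ in the tensor entries $a_{\mu_k,\alpha_k,k}(x_\ell)$ of \eqref{eq:aijkfxi}: the entries in the rows corresponding to $k<\ell$ and $k>\ell$ depend on $x_\ell$ through their limits of integration, the entry for $k=\ell$ depends on it through evaluation at $x_\ell$, and the remaining entries (for $k>M$) are constants.

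The only point requiring a word of care — and the mild obstacle here — is justifying the interchange of integration and summation together with the fact that $f_{\lambda_\ell}$ is a bona fide density so that the integral defining $\EX{\varphi(\lambda_\ell)}$ makes sense; this is immediate once one assumes $\varphi$ is integrable against $f_{\lambda_\ell}$ (e.g. $\varphi$ bounded, or more generally $\EX{|\varphi(\lambda_\ell)|}<\infty$), which is the natural hypothesis for the statement and can be assumed throughout. Under that hypothesis the proof is a one-line substitution followed by expanding $\mathcal{T}(\cdot)$, so no genuine difficulty arises; the content of the theorem is really that the marginal density from Lemma~\ref{th:pdfith} is explicit enough to make moments and other functional expectations computable as a single integral of a pseudo-determinant, and in fact as a finite sum of one-dimensional integrals.

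\begin{proof}
By Lemma~\ref{th:pdfith}, the $\ith{\ell}$ ordered eigenvalue has density $f_{\lambda_\ell}(x_\ell) = c(\ell)\, K\, {\mathcal{T}}\!\left({\bm A}(x_\ell)\right)$, with ${\bm A}(x_\ell)$ as in \eqref{eq:aijkfxi}. Hence
\[
\EX{\varphi(\lambda_\ell)} = \int_\alpha^\beta \varphi(x_\ell)\, f_{\lambda_\ell}(x_\ell)\, dx_\ell = c(\ell)\, K \int_\alpha^\beta \varphi(x_\ell)\, {\mathcal{T}}\!\left({\bm A}(x_\ell)\right) dx_\ell = c(\ell)\, K\, {\mathcal{I}},
\]
which establishes \eqref{eq:expgxiord} and the first equality in \eqref{eq:mathcalI}. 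For the second equality, insert the definition \eqref{eq:defdetdetA} of the pseudo-determinant operator,
\[
{\mathcal{T}}\!\left({\bm A}(x_\ell)\right) = \sum_{\bm\mu}\sgn(\bm\mu)\sum_{\bm\alpha}\sgn(\bm\alpha)\prod_{k=1}^N a_{\mu_k,\alpha_k,k}(x_\ell),
\]
and interchange the two finite sums over the permutations $\bm\mu,\bm\alpha$ of $1,\ldots,N$ with the integral over $x_\ell$ (legitimate since the sums are finite and $\varphi$ is assumed integrable against $f_{\lambda_\ell}$). This gives
\[
{\mathcal{I}} = \sum_{\bm\mu}\sgn(\bm\mu)\sum_{\bm\alpha}\sgn(\bm\alpha)\int_\alpha^\beta \varphi(x_\ell)\prod_{k=1}^N a_{\mu_k,\alpha_k,k}(x_\ell)\, dx_\ell,
\]
as claimed.
\end{proof}
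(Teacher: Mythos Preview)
Your proof is correct and matches the paper's own argument, which simply says ``By direct substitution.'' You have spelled out exactly that substitution---inserting the density from Lemma~\ref{th:pdfith} and then expanding $\mathcal{T}(\cdot)$ via \eqref{eq:defdetdetA}---so nothing is missing or different.
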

\begin{proof} By direct substitution.
\end{proof}
By specializing the previous result to $\varphi (x)=x^m$ we obtain the moments of the distribution of an arbitrary ordered eigenvalue, with $\varphi (x)=r(x;0,\lambda_\ell)$ we obtain the \ac{c.d.f.}, and with $\varphi (x)=e^{\nu x}$ we get the \ac{m.g.f.}  of $\lambda_\ell$. 

Note also that in many cases the evaluation of \eqref{eq:mathcalI} does not require multidimensional numerical integration. 
For example, as shown by \eqref{eq:aijkfxi_wishart}, for Wishart matrices the functions $a_{i,j,k}(x_\ell)$ can be written in closed form.


\subsection{Probability that all eigenvalues are within the interval $[a,b]$}
\label{sec:subprobeigenab}
\begin{theorem}
The probability that all eigenvalues are within the interval
$[a,b]$ is given by

\begin{equation}\label{eq:probeigenab}
\Pr\left\{\text{All eigenvalues are} \in [a,b]\right\}=\frac{K}{M!} \,
{\cal T}\left({\bm A}\right)
\end{equation}
where the $N\times N \times N$ tensor ${\bm A}$ has elements
\begin{equation}
a_{i,j,k}=\left\{\begin{array}{ll}
\displaystyle\int_{a}^b \Phi_i(x) \Psi_j(x) \xi(x)dx & \qquad i \le M, \,\,\,k \le M\\
\displaystyle\int_{a}^b \Psi_j(x) \xi(x)dx & \qquad i>M,\,\,\,k \le M\\
0 & \qquad i < k, \,\,\,\,\,k>M \\
\bar\Psi_{j,k} & \qquad i \ge k, \,\,\,\,\,k > M
\end{array}\right.
\end{equation}
\end{theorem}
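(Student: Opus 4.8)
The plan is to recognize this statement as an immediate corollary of Theorem~\ref{th:det3proda}, with no new machinery required. The event that all eigenvalues are in $[a,b]$ has probability
$$
\Pr\left\{\text{All eigenvalues are} \in [a,b]\right\}=\int\cdots\int_{a \le x_M \le \cdots \le x_1 \le b} f_{\bl{}}(\bm x)\, d\bm x ,
$$
where $f_{\bl{}}(\bm x)$ is the ordered joint \ac{p.d.f.} of the form \eqref{eq:genericpdfproddet}, namely $K\,|\bm\Phi(\bm x)|\cdot|\bm\Psi(\bm x)|\prod_{i=1}^M \xi(x_i)$, integrated over the ordered simplex $\{a\le x_M\le\cdots\le x_1\le b\}$ sitting inside the hypercube $\mathcal{D}=[a,b]^M$.

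First I would use the symmetry of the integrand: the product $|\bm\Phi(\bm x)|\cdot|\bm\Psi(\bm x)|\prod_i\xi(x_i)$ is a symmetric function of $(x_1,\dots,x_M)$, because swapping two variables changes the sign of each of the two determinants $|\bm\Phi|$ and $|\bm\Psi|$ (recall from \eqref{eq:phi} that the last $N-M$ columns of $\bm\Psi$ are constant, so permuting the $x_j$'s permutes only the first $M$ columns), and those two sign changes cancel. Hence integrating over any one of the $M!$ orderings of the variables gives the same value, and the integral over the ordered simplex equals $1/M!$ times the integral over the full hypercube $\mathcal{D}=[a,b]^M$. This yields
$$
\Pr\left\{\text{All eigenvalues are} \in [a,b]\right\}=\frac{K}{M!}\int\cdots\int_{\mathcal{D}} \left|\bm\Phi(\bm x)\right|\cdot\left|\bm\Psi(\bm x)\right|\prod_{k=1}^{M}\xi(x_k)\, d\bm x .
$$

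Next I would apply Theorem~\ref{th:det3proda} directly, taking $\xi_k(x)=\xi(x)$ for every $k=1,\dots,M$ and $\Phi_i(\cdot)$, $\Psi_j(\cdot)$ as given. The theorem then identifies the hypercube integral with ${\mathcal{T}}(\bm C)$, where the tensor $\bm C$ has the entries prescribed by \eqref{eq:ctensor}; specializing $\xi_k\equiv\xi$ there, the first two cases become $\int_a^b \Phi_i(x)\Psi_j(x)\xi(x)\,dx$ for $i\le M$ and $\int_a^b \Psi_j(x)\xi(x)\,dx$ for $i>M$ (both with $k\le M$), while the last two cases ($k>M$) reduce to $0$ for $i<k$ and $\bar\Psi_{j,k}$ for $i\ge k$, exactly the tensor $\bm A$ displayed in the theorem statement. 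Combining with the $1/M!$ factor gives \eqref{eq:probeigenab}.

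There is essentially no obstacle here: the only point requiring a line of justification is the symmetry/$M!$ reduction, and the only bookkeeping is checking that the specialization $\xi_k\equiv\xi$ of \eqref{eq:ctensor} coincides term-by-term with the tensor in the statement (note that since $\xi_k$ no longer depends on $k$, the entries $a_{i,j,k}$ for $k\le M$ are independent of $k$, though the operator ${\mathcal{T}}(\cdot)$ is still applied as written). So the proof is genuinely ``by direct application of Theorem~\ref{th:det3proda},'' which is presumably why the authors state it as a short corollary.
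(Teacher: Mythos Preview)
Your proposal is correct and follows exactly the same route as the paper: reduce the ordered-simplex integral to $1/M!$ times the hypercube integral by symmetry of the integrand, then invoke Theorem~\ref{th:det3proda} with $\xi_k(x)=\xi(x)$ for all $k$, which specializes \eqref{eq:ctensor} to the tensor in the statement. The paper's proof is in fact terser than yours (it states \eqref{eq:probeigenab1} and then just says ``by applying Theorem~\ref{th:det3proda}''), so your added justification of the symmetry step is a welcome elaboration rather than a deviation.
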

\begin{proof} For the proof we note that
\begin{eqnarray}\label{eq:probeigenab1}
\underbrace{\int\cdots \int}_{b\geq x_1 \geq \cdots \geq x_M
\geq a}
 f_{\bm \lambda}({\bm x}) d{\bm x} =
 \frac{K}{M!}\int_a^b \ldots
\int_a^b \left| {\bm \Phi}
 \left({\bm x}\right)\right| \cdot \left| {\bm \Psi}\left({\bm x}\right)\right|
\prod_{k=1}^{M} \, \xi(x_k) d{\bm x}.
\end{eqnarray}

 Then, by applying Theorem~\ref{th:det3proda} we get
\eqref{eq:probeigenab}.
\end{proof}
As a special case, if ${\bm \Psi(\bm x)} \in {\cal C}^{M \times M}$ with $\Psi_{i,j}=\Psi_i(x_j)$,\footnote{This is the case, for instance, of the joint \ac{p.d.f.} of the eigenvalues of central Wishart matrices}  the probability that all eigenvalues are within the interval
$[a,b]$ becomes
\begin{equation}\label{eq:probeigenab2}
\Pr\left\{\text{All eigenvalues are} \in [a,b]\right\}=K \,\left|{\bm B}\right|
\end{equation}
where ${\bm B} \in {\cal C}^{M \times M}$ has elements
\begin{equation} \label{eq:aijprobeigenab}
  b_{i,j} \teq \int_{a}^{b} \Phi_i(x)\Psi_j(x)\xi(x) dx \,.
\end{equation}
%
%
%
\subsection{The unordered case: marginal joint distribution of $L$ arbitrary eigenvalues.} 
\label{sec:submargdisti1i2iLgenunord}
\begin{theorem}
	The joint \ac{p.d.f.} of $L$ arbitrary unordered eigenvalues 
	$\lambda_{1},\lambda_{2}, \ldots,\lambda_{L}$ (note that due to
	symmetry we can always assume the first $L$ without loss in
	generality) is given by
	
	\begin{equation}\label{eq:fxi1xiLgenunord}
	f^{\text{(unord)}}_{\lambda_{1},\lambda_{2},
		\ldots,\lambda_{L}}(x_{1},x_{2}, \ldots,x_{L})=\frac{K}{M!}\,
	{\mathcal{T}}\left({\bm A}\right)
	\end{equation}
	where the $N\times N \times N$ tensor ${\bm A}$ has elements
	\begin{equation} \label{eq:aijkgeneralTh7}
	a_{i,j,k} =\begin{cases} 
	\displaystyle\int_{\alpha}^{\beta} \varsigma_i(x)\Psi_j(x)\,\eta_k(x)
	\,dx & k \le M\\
		0 & k > M,\quad  i < k \\
	\bar{\Psi}_{j,k} & k > M, \quad  i \ge k
	\end{cases}
	\end{equation}
	with
	\begin{equation} \label{eq:etak2}
	\eta_k(x) \teq \begin{cases}
	\delta(x-x_k)  &  \quad k \le L \\
	1  &  \quad \text{elsewhere} .
	\end{cases}
	\end{equation}
\end{theorem}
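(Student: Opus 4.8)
The plan is to derive the unordered joint density from the ordered joint density \eqref{eq:genericpdfproddet} by symmetrization, exactly paralleling the structure of Lemmas \ref{th:pdfith} and \ref{th:jointpdfellands}, but now without any ordering constraint between the $L$ fixed values or the remaining $M-L$ variables. First I would write the unordered joint p.d.f.\ of $\lambda_1,\ldots,\lambda_L$ as the integral of the \emph{unordered} joint density of all $M$ eigenvalues over the remaining $M-L$ variables, each ranging freely over $(\alpha,\beta)$. The unordered density of all $M$ eigenvalues is, by the standard symmetrization argument, $\tfrac{1}{M!}$ times the symmetric function $K\,|{\bm \Phi}({\bm x})|\cdot|{\bm \Psi}({\bm x})|\prod_i \xi(x_i)$ (the function in \eqref{eq:genericpdfproddet} is already symmetric in ${\bm x}$, so dividing by $M!$ and dropping the ordering constraint leaves the total mass unchanged). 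Hence
\begin{equation*}
f^{\text{(unord)}}_{\lambda_1,\ldots,\lambda_L}(x_1,\ldots,x_L)
= \frac{K}{M!}\int_\alpha^\beta\!\!\cdots\!\int_\alpha^\beta
\left|{\bm \Phi}({\bm x})\right|\cdot\left|{\bm \Psi}({\bm x})\right|
\prod_{k=1}^M \xi(x_k)\,dx_{L+1}\cdots dx_M .
\end{equation*}

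Next I would recast this as an integral over the full hypercube ${\mathcal{D}}=\{a\le x_k\le b\}$ with $a=\alpha$, $b=\beta$, by inserting a Dirac delta $\delta(x_k-x_k)$ for each of the $L$ fixed variables and the constant weight $1$ for each of the remaining $M-L$ variables. Concretely, this means applying Theorem \ref{th:det3proda} with the choice
\begin{equation*}
\xi_k(x)=\begin{cases}\delta(x-x_k)\,\xi(x) & k\le L\\ \xi(x) & L<k\le M,\end{cases}
\end{equation*}
which is precisely the function $\eta_k(x)$ of \eqref{eq:etak2} multiplied by $\xi(x)$. Theorem \ref{th:det3proda} then immediately converts the multiple integral into ${\mathcal{T}}({\bm C})$, and using the definition \eqref{eq:varsigma} of $\varsigma_i$ (to absorb $\Phi_i(x)\xi(x)$ for $i\le M$ and $\xi(x)$ for $i>M$ into $\varsigma_i$) the tensor ${\bm C}$ takes exactly the form \eqref{eq:aijkgeneralTh7} with $\eta_k$ as in \eqref{eq:etak2}. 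This yields \eqref{eq:fxi1xiLgenunord}.

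There is essentially no serious obstacle here; the result is a corollary of Theorem \ref{th:det3proda} once the correct test functions are identified. The two points that require a little care are: (i) justifying the $1/M!$ factor and the removal of the ordering region — this is the routine observation that, because the integrand in \eqref{eq:genericpdfproddet} is invariant under permutations of $(x_1,\ldots,x_M)$, integrating it over the ordered simplex and then accounting for all $M!$ orderings gives $1/M!$ times the integral over the full cube; and (ii) checking that the delta functions interact correctly with the structure of ${\bm C}$ in \eqref{eq:ctensor}, i.e.\ that for $k\le L$ the entry $C_{i,j,k}=\int_\alpha^\beta \Phi_i(x)\Psi_j(x)\delta(x-x_k)\xi(x)\,dx=\varsigma_i(x_k)\Psi_j(x_k)$ and for $L<k\le M$ it is $\int_\alpha^\beta \varsigma_i(x)\Psi_j(x)\,dx$ — both of which are the $k\le M$ branch of \eqref{eq:aijkgeneralTh7} evaluated at $\eta_k$. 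The $i>M$, $k>M$ branches of ${\bm A}$ are inherited verbatim from ${\bm C}$ in \eqref{eq:ctensor} and require no change. I would therefore present the proof in three short steps: the symmetrization identity, the insertion of delta/unit weights, and the invocation of Theorem \ref{th:det3proda}.
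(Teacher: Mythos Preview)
Your proposal is correct and follows exactly the approach the paper intends: the paper's own proof consists only of the sentence ``For the proof we proceed similarly to the previous cases,'' and your three steps (symmetrization to obtain the $1/M!$ factor and remove the ordering constraint, insertion of $\delta$-weights for $k\le L$ and unit weights for $L<k\le M$, and invocation of Theorem~\ref{th:det3proda}) are precisely the unordered analogue of the argument in Lemmas~\ref{th:pdfith} and~\ref{th:jointpdfellands}. The two care points you flag are the only subtleties, and both are handled correctly.
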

\begin{proof} For the proof we proceed similarly to the previous cases.
\end{proof}
Note that some results for the unordered case can be also found in
\cite{MaaAis:J07}.
%

\subsection{The unordered case: expected value, moments and \ac{c.d.f.} of $L$ eigenvalues}
\label{sec:subexpLgenunord}
\begin{theorem}
\label{th:expectedLunordered}
The expected value of the product of arbitrary functions $\varphi _k(\cdot)$
applied to the unordered eigenvalues is given by

\begin{equation}\label{eq:expprodetakunord}
\EX{\prod_{\ell=1}^{M}\varphi _\ell(\lambda_{\ell})}=\frac{K}{M!} \,
{\mathcal{T}}\left({\bm A}\right)
\end{equation}
where the $N\times N \times N$ tensor ${\bm A}$ has elements:
%
\begin{equation}
\label{eq:aijkgeneralTh8}
a_{i,j,k}=\left\{\begin{array}{ll} 
\displaystyle\int_{\alpha}^\beta \Phi_i(x) \Psi_j(x) \xi(x) \varphi _k(x)(x)\,dx & \qquad i \le M, \,\,\,k \le M\\
\displaystyle \int_{\alpha}^\beta \Psi_j(x) \xi(x) \varphi _k(x)\, dx & \qquad i>M,\,\,\,k \le M\\
0 & \qquad i < k, \,\,\,\,\,k>M  \\
\displaystyle\bar\Psi_{j,k} & \qquad i \ge k, \,\,\,\,\,k > M
\end{array}\right.
\end{equation}
\end{theorem}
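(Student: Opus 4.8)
The plan is to derive \eqref{eq:expprodetakunord} from the ordered joint \ac{p.d.f.} \eqref{eq:genericpdfproddet} by first symmetrizing over the ordering. Starting from the definition,
\begin{equation*}
\EX{\prod_{\ell=1}^{M}\varphi _\ell(\lambda_{\ell})}
= \underbrace{\int\cdots\int}_{\beta\ge x_1\ge\cdots\ge x_M\ge\alpha}
\left(\prod_{\ell=1}^{M}\varphi _\ell(x_\ell)\right) f_{\bl{}}(\bm x)\,d\bm x ,
\end{equation*}
I would note that the integrand other than the $\varphi_\ell$ factors, namely $K\,|{\bm \Phi}({\bm x})|\cdot|{\bm \Psi}({\bm x})|\prod_i\xi(x_i)$, is symmetric under permutations of $x_1,\ldots,x_M$ (the two determinants each change sign, so the product is invariant, and the $\xi$ product is manifestly symmetric). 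The product $\prod_\ell\varphi_\ell(x_\ell)$ is not symmetric, but summing the integrand over all $M!$ relabelings of the simplex tiles the full hypercube $[\alpha,\beta]^M$ up to a measure-zero overlap, and each relabeling contributes the same value of the integral after a change of variables. Hence the ordered integral over the simplex equals $\tfrac1{M!}$ times the integral over the hypercube with $\prod_\ell\varphi_\ell(x_\ell)$ replaced by any fixed assignment — i.e.
\begin{equation*}
\EX{\prod_{\ell=1}^{M}\varphi _\ell(\lambda_{\ell})}
= \frac{K}{M!}\int_\alpha^\beta\!\!\cdots\!\int_\alpha^\beta
\left| {\bm \Phi}({\bm x})\right|\cdot\left| {\bm \Psi}({\bm x})\right|
\prod_{k=1}^{M}\xi(x_k)\,\varphi _k(x_k)\,d\bm x .
\end{equation*}

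The second step is a direct application of Theorem~\ref{th:det3proda}. I would set $a=\alpha$, $b=\beta$, and take the per-coordinate weight functions in that theorem to be $\xi_k(x)=\xi(x)\,\varphi _k(x)$. Since $N\ge M$ and the matrix ${\bm \Psi}$ has exactly the block structure required by \eqref{eq:phi}, Theorem~\ref{th:det3proda} applies verbatim and yields ${\cal T}({\bm C})$ with the tensor ${\bm C}$ given by \eqref{eq:ctensor}. Substituting $\xi_k(x)=\xi(x)\varphi_k(x)$ into \eqref{eq:ctensor} reproduces exactly the four cases of \eqref{eq:aijkgeneralTh8}: for $i\le M,\ k\le M$ the entry is $\int_\alpha^\beta\Phi_i(x)\Psi_j(x)\xi(x)\varphi_k(x)\,dx$; for $i>M,\ k\le M$ the $\Phi_i$ factor drops and we get $\int_\alpha^\beta\Psi_j(x)\xi(x)\varphi_k(x)\,dx$; and the $k>M$ cases ($0$ when $i<k$, $\bar\Psi_{j,k}$ when $i\ge k$) are carried over unchanged since they involve neither the integration variable nor the $\varphi_k$. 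This gives \eqref{eq:expprodetakunord} with ${\bm A}={\bm C}$.

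The only genuine point requiring care — and the one I would write out most carefully — is the symmetrization argument: verifying that the sum of the simplex integrals over all $M!$ orderings equals the hypercube integral, and that after relabeling the variables each term equals the hypercube integral of the single symmetrized integrand. This is routine (it is the same device used implicitly in \eqref{eq:probeigenab1} and in the proofs of Lemmas~\ref{th:pdfith} and \ref{th:jointpdfellands}, where the indicator and delta functions play the role of the $\varphi_k$), but it is where one must be explicit that the non-$\varphi$ part of the density is permutation-invariant and that the boundary overlaps between simplices have Lebesgue measure zero. Once that is granted, everything else is bookkeeping: identify $\xi_k=\xi\varphi_k$ and quote Theorem~\ref{th:det3proda}. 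As a sanity check, setting $\varphi_k\equiv 1$ for all $k$ recovers the normalization $\EX{1}=1$ and, more generally, choosing $\varphi_k(x)=\delta(x-x_k)$ for $k\le L$ and $\varphi_k\equiv1$ otherwise recovers Theorem in Section~\ref{sec:submargdisti1i2iLgenunord}, namely \eqref{eq:fxi1xiLgenunord}–\eqref{eq:etak2}; taking $\varphi_k(x)=x^{m_k}$ gives the joint moments.
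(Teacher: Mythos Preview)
Your second step---identifying $\xi_k(x)=\xi(x)\varphi_k(x)$ and invoking Theorem~\ref{th:det3proda}---is exactly right and is precisely what the paper does (its proof reads ``Immediate by Theorem~\ref{th:det3proda}''). The gap is in your first step.

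Your opening display
\[
\EX{\prod_{\ell=1}^{M}\varphi _\ell(\lambda_{\ell})}
= \int\cdots\int_{\beta\ge x_1\ge\cdots\ge x_M\ge\alpha}
\Big(\prod_{\ell}\varphi _\ell(x_\ell)\Big) f_{\bl{}}(\bm x)\,d\bm x
\]
computes the expectation for the \emph{ordered} eigenvalues, not the unordered ones. And the sentence that follows---``each relabeling contributes the same value of the integral after a change of variables''---is false when the $\varphi_\ell$ differ. Under the change of variables that sends the $\sigma$-simplex back to the reference simplex, the symmetric part $K|{\bm\Phi}||{\bm\Psi}|\prod\xi$ is preserved but the product $\prod_\ell\varphi_\ell(x_\ell)$ becomes $\prod_\ell\varphi_{\sigma(\ell)}(x_\ell)$, which is a different integrand. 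Concretely, with $M=2$, $f\equiv 1$ on $[0,1]^2$, $\varphi_1(x)=x$, $\varphi_2\equiv 1$, the simplex integral is $\int_0^1 x_1^2\,dx_1=1/3$ while $\tfrac{1}{2!}$ times the cube integral is $1/4$. So your passage from the first display to the second is not valid.

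The fix is simple: either start directly from the unordered joint density $f^{\text{(unord)}}(\bm x)=\tfrac{K}{M!}|{\bm\Phi}({\bm x})||{\bm\Psi}({\bm x})|\prod_i\xi(x_i)$ on the full hypercube $[\alpha,\beta]^M$ (this is implicit in Section~\ref{sec:submargdisti1i2iLgenunord}), which immediately yields your second display; or, if you prefer to begin from the ordered law, write the unordered expectation as the average $\tfrac{1}{M!}\sum_{\sigma}\EX{\prod_\ell\varphi_\ell(\lambda_{\sigma(\ell)}^{\text{ord}})}$, change variables in each term, and then sum the resulting $\sigma$-simplices to tile the cube. Either route lands on the same hypercube integral, after which Theorem~\ref{th:det3proda} finishes the job exactly as you describe.
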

\begin{proof} Immediate by Theorem~\ref{th:det3proda}.
\end{proof}
Special cases include the joint moments for unordered eigenvalues: 
\begin{equation}\label{eq:jointmomunord}
\EX{\lambda_{1}^{m_1}\cdots \lambda_{M}^{m_M}}
\end{equation}
obtained with $\varphi _\ell(\lambda_{\ell})=\lambda_{\ell}^{m_\ell}$ (by setting
$m_\ell=0$ for some $\ell$ we obtain the joint moments of the marginal eigenvalues). 

The joint \ac{m.g.f.} can be written as
\begin{equation}\label{eq:jointmgfunord}
M_{\bl{}}\left(\nu_1,\ldots,\nu_M \right)\teq\EX{\prod_{\ell=1}^{M} e^{\nu_\ell\lambda_{\ell}}}
\end{equation}
which can be  obtained from \eqref{eq:expprodetakunord} with $\varphi _\ell(\lambda_{\ell})=e^{\nu_\ell\lambda_{\ell}}$.

\section{Results for Complex Wishart Matrices}
\label{sec:wishart_case}
%
As previously observed, the expression for the joint \ac{p.d.f.} of the eigenvalues of complex central Wishart matrices has the same form as in \eqref{eq:genericpdfproddet}. 
To apply the results of Sections \ref{sec:mainresults} and \ref{sec:subexpi1genord} to the cases of Wishart and pseudo-Wishart matrices, the following Lemma can be used \cite{ChiWinShi:J10}.

\begin{lemma}\label{lemma: pdfquadraticforms} 

Denoting by $\bm{X}$ a complex Gaussian $(p \times n)$ random matrix with zero mean, unit variance, \ac{i.i.d.} entries and by $\bm{\Sigma}$ an $(n \times n)$ positive definite matrix, the joint \ac{p.d.f.} of the (real) nonzero ordered eigenvalues $\lambda_1 \geq \lambda_2 \geq \ldots \geq \lambda_{\nmin} \geq 0$, with $\nmin=\min(n,p)$,  of the $(p \times p)$  quadratic form $\bm{W}=\bm{X}\bm{\Sigma}\bm{X}^{\dag}$ is
\begin{equation} \label{eq:jpdfgeneralquadraticform}
f_{\bl{}} (x_{1}, \ldots, x_{\nmin})
    = K
    \left|{\bm V(\bm x)}\right| \cdot
        \left|{\bm G}({\bm x,\bm \phi})\right| \prod_{i=1}^{\nmin} \xi(x_i)
\end{equation}
where $\xi(x)=x^{p-\nmin}$, ${\bm V(\bm x)}$ is the ($\nmin \times \nmin$) Vandermonde matrix with elements $v_{i,j}=x^{i-1}_j$. 
The constant $K$ is given by 
\begin{equation}\label{eq:Kquadformgen}
K= \frac{(-1)^{p(n-\nmin)}}{\Gamma_{(\nmin)}(p)} \,
\frac{\prod_{i=1}^{L}\phi_{(i)}^{m_i p}}{ \prod_{i=1}^{L}
\Gamma_{(m_i)}(m_i)
\prod_{i<j}\left(\phi_{(i)}-\phi_{(j)}\right)^{m_i m_j}}
\end{equation}
where $\Gamma_{(m)}(n)\triangleq \prod_{i=1}^{m}(n-i)!$ and  $\phi_{(1)} > \phi_{(2)} \ldots > \phi_{(L)}$ are the $L$ distinct eigenvalues of ${\bm \Sigma}^{-1}$, with associated multiplicities $m_1, \ldots , m_L$ such that $\sum_{i=1}^{L} m_i=n$. 

The ($n\times n$) matrix ${\bm G}({\bm x,\bm\phi})$ has elements
\begin{equation} \label{eq:gtilde}
{g}_{i,j}=\left\{
  \begin{array}{cll}
    g_{i}(x_j)&=\left(-x_j \right)^{d(i)} e^{-\phi_{(e(i))} x_j} & \,\,\,j=1,\ldots, \nmin \\
    \bar g_{i,j}&=\left[n-j\right]_{d(i)} \phi_{(e(i))}^{n-j-d(i)}&\,\,\, j=\nmin+1,\ldots,n \\
  \end{array}
  \right.
\end{equation}
where $[a]_n\teq a(a-1)\cdots (a-n+1)$,  $e(i)$ denotes the unique
integer such that
$$m_1+\ldots+m_{e(i)-1}< i \leq m_1 +\ldots+m_{e(i)} $$
and 
$$d(i)=\sum_{k=1}^{e(i)}m_k -i .$$
\end{lemma}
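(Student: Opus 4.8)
\textbf{Proof plan for Lemma~\ref{lemma: pdfquadraticforms}.} The plan is to start from a known integral representation of the joint p.d.f.\ of the eigenvalues of $\bm W=\bm X\bm\Sigma\bm X^\dag$ when $\bm\Sigma$ has distinct eigenvalues, and then take the confluent limit to handle repeated eigenvalues. For the distinct-eigenvalue case ($L=n$, all $m_i=1$), the classical result (e.g.\ from the theory of the noncentral/correlated Wishart or directly from the Harish-Chandra--Itzykson--Zuber integral) gives a joint density proportional to $|\bm V(\bm x)|$ times a determinant of exponentials $e^{-\phi_{(j)}x_i}$, times $\prod_i x_i^{p-\nmin}$. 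This is already in the form \eqref{eq:jpdfgeneralquadraticform} with $\bm G$ built from $g_i(x_j)=e^{-\phi_{(i)}x_j}$ in the square case $n=\nmin$; the rectangular case $n>\nmin$ requires augmenting the determinant with the constant columns $\bar g_{i,j}$, which comes from integrating out the degrees of freedom associated with the $n-\nmin$ ``extra'' dimensions (equivalently, from the block structure of $\bm X\bm\Sigma\bm X^\dag$ when $\nmin=p<n$).

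Next I would carry out the confluence. When $\bm\Sigma^{-1}$ has distinct eigenvalues $\phi_{(1)},\dots,\phi_{(L)}$ with multiplicities $m_1,\dots,m_L$, one views them as limits of $n$ distinct values clustered in $L$ groups, and applies the standard confluent-Vandermonde / l'H\^opital argument to the determinant $|\bm G|$: within the $i$-th group one replaces the rows $e^{-\phi x_j}$ by successive derivatives $\partial_\phi^{d}e^{-\phi x_j}\big|_{\phi=\phi_{(e(i))}}=(-x_j)^{d}e^{-\phi_{(e(i))}x_j}$, which is exactly the first line of \eqref{eq:gtilde}; the indices $e(i)$ and $d(i)$ are precisely the bookkeeping for ``which group'' and ``which derivative order''. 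The same differentiation applied to the constant entries $\phi^{\,n-j}$ in the rectangular block produces $[n-j]_{d(i)}\,\phi_{(e(i))}^{\,n-j-d(i)}$, matching the second line of \eqref{eq:gtilde}. The differentiations also generate Vandermonde-type factors $\prod_{i<j}(\phi_{(i)}-\phi_{(j)})^{m_im_j}$ and factorials $\prod_i\Gamma_{(m_i)}(m_i)$ in the denominator of the normalizing constant, which I would track to arrive at \eqref{eq:Kquadformgen}; the sign $(-1)^{p(n-\nmin)}$ and the factor $\Gamma_{(\nmin)}(p)^{-1}$ come from the Jacobian/normalization of the distinct-eigenvalue base case and from the $(-x_j)^{d(i)}$ versus $x_j^{d(i)}$ convention.

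Finally I would verify that the resulting expression is a valid p.d.f.\ (nonnegativity on the ordered region $x_1\ge\cdots\ge x_{\nmin}\ge0$ follows since it is a pointwise limit of densities, and the stated $K$ makes it integrate to one by construction), and remark that it is manifestly of the form \eqref{eq:genericpdfproddet} with $\bm\Phi(\bm x)=\bm V(\bm x)$, $\bm\Psi(\bm x)=\bm G(\bm x,\bm\phi)$ and $\xi(x)=x^{p-\nmin}$, so that Theorems~\ref{th:det3proda} and \ref{th:marginal_joint} apply directly. The main obstacle I anticipate is purely bookkeeping rather than conceptual: organizing the iterated l'H\^opital limit so that the row-derivative structure, the surviving Vandermonde factors in $\bm\phi$, and all the Gamma-function and sign constants are collected consistently into \eqref{eq:Kquadformgen} and \eqref{eq:gtilde}. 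I would sidestep re-deriving the distinct-eigenvalue base case from scratch by citing \cite{ChiWinShi:J10}, where the quadratic-form density and this confluence argument are worked out in full detail.
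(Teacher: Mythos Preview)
The paper does not give a proof of this lemma at all: it simply states the result and attributes it to \cite{ChiWinShi:J10}. Your proposal is therefore consistent with the paper's treatment, and in fact goes further by sketching the standard confluence argument (distinct-eigenvalue base case via HCIZ-type integration, then l'H\^opital on clustered $\phi$'s to produce the $(-x_j)^{d(i)}$ rows and the falling-factorial constant block) before ultimately deferring to the same reference.
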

%

It can be checked that the uncorrelated case \eqref{eq:jpdfuncorrnodetcomplex} is the special case of \eqref{eq:jpdfgeneralquadraticform} for ${\bm \Sigma}={\bm I}$. 

Another interesting special case is when ${\bm \Sigma}$ is spiked, i.e.,  with $\sigma_1>\sigma_2=\sigma_3=\sigma_4=\cdots=\sigma_n$. For this spiked model correlation we have the following result.
\begin{lemma}\label{lemmaCWspiked}
Let ${\bm W} \sim {\mathcal{CW}}_M(n, \bm{\Sigma})$ be a complex Wishart matrix, $n\geq M$. 
Denote $\sigma_1 > \sigma_2 = \ldots = \sigma_{M} > 0$ the ordered eigenvalues of $\bm \Sigma$ (spiked covariance matrix). 
Then, the joint \ac{p.d.f.} of the ordered eigenvalues of  ${\bm W}$ is  
\begin{align}\label{eq:jpdfcorrspiked}
f_{\bl{}}(x_{1}, \ldots, x_{\nmin})
    &= K 
    \left|\bm{E}\left({\bm x},\bm{\sigma}\right)\right|
        \cdot \prod_{i<j}^{\nmin} (x_i-x_j)
        \cdot \prod_{i=1}^{\nmin} x^{n-\nmin}_i 
\end{align}
where $\bm{E}\left({\bm x},\bm{\sigma}\right)$ has elements 
\begin{align*}
\label{eq:eijcomplexcorr}
e_{i,j}=\left\{
  \begin{array}{ll}
    \displaystyle e^{-x_i/\sigma_1}  & j=1\\
   \displaystyle x_i^{\nmin-j} e^{-x_i/\sigma_2}  & j=2, \ldots,\nmin
  \end{array}
\right.
\end{align*}
and
\begin{equation*}
\label{eq:ksigmaspiked}
\frac{1}{K} =  \sigma_1^{n-\nmin+1} \sigma_2^{(n-1)(\nmin-1)} (\sigma_1-\sigma_2)^{\nmin-1} \, \prod_{i=1}^\nmin (n-i)! \prod_{\ell=2}^{\nmin-2} \ell ! 
\end{equation*}
\end{lemma}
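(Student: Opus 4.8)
\textbf{Proof plan for Lemma \ref{lemmaCWspiked}.}

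The plan is to derive \eqref{eq:jpdfcorrspiked} as a specialization of Lemma \ref{lemma: pdfquadraticforms}, since the Wishart matrix $\bm{W} \sim {\mathcal{CW}}_M(n,\bm\Sigma)$ has the same eigenvalues as the quadratic form $\bm{X}\bm\Sigma\bm{X}^\dag$ (here $p=\nmin=M$, so $\xi(x)=x^{n-\nmin}$ up to the relabeling $p \leftrightarrow n$ in the statement). First I would identify the spectrum of $\bm\Sigma^{-1}$: the eigenvalues of $\bm\Sigma$ are $\sigma_1 > \sigma_2 = \cdots = \sigma_M$, but since $\bm\Sigma$ is $n \times n$ with $n \ge M$ one must be careful about which multiplicities enter \eqref{eq:Kquadformgen}--\eqref{eq:gtilde}; in the spiked model the distinct eigenvalues of $\bm\Sigma^{-1}$ are $\phi_{(1)} = 1/\sigma_1$ with multiplicity $m_1 = 1$ and $\phi_{(2)} = 1/\sigma_2$ with multiplicity $m_2 = n-1$, so $L=2$. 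With these values the functions $e(i)$ and $d(i)$ become explicit: $e(1)=1$, $d(1)=0$, while for $i = 2,\ldots,n$ we get $e(i)=2$ and $d(i)=n-i$.

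Next I would substitute these into the matrix ${\bm G}({\bm x},{\bm \phi})$ of \eqref{eq:gtilde}. The first row ($i=1$) gives $g_1(x_j) = e^{-x_j/\sigma_1}$ for $j \le \nmin$ and the corresponding constants for $j > \nmin$; rows $i = 2,\ldots,n$ give $g_i(x_j) = (-x_j)^{n-i} e^{-x_j/\sigma_2}$ for $j \le \nmin$. The key observation is that the bottom $n-M$ columns (the constant block) together with the extra rows collapse: after column operations the determinant $|{\bm G}|$ factors as a constant (absorbed into $K$) times the $M \times M$ determinant $|\bm{E}({\bm x},{\bm\sigma})|$ with the stated entries $e_{i1} = e^{-x_i/\sigma_1}$ and $e_{ij} = x_i^{\nmin-j} e^{-x_i/\sigma_2}$ — the sign $(-1)^{n-i}$ and the reindexing $n-i \mapsto \nmin - j$ are handled by elementary row/column manipulations whose net effect is an overall constant. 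Combined with $|{\bm V(\bm x)}| = \prod_{i<j}(x_j - x_i)$, which up to sign equals $\prod_{i<j}^{\nmin}(x_i - x_j)$, this yields the product-of-determinants form in \eqref{eq:jpdfcorrspiked}.

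Finally I would compute the normalizing constant by specializing \eqref{eq:Kquadformgen}: with $L=2$, $m_1=1$, $m_2=n-1$, $\phi_{(1)}=1/\sigma_1$, $\phi_{(2)}=1/\sigma_2$, one has $\Gamma_{(m_1)}(m_1) = \Gamma_{(1)}(1) = 1$, $\Gamma_{(m_2)}(m_2) = \prod_{i=1}^{n-1}(n-1-i)!$, $\Gamma_{(\nmin)}(p) = \prod_{i=1}^{M}(n-i)!$ (with the $p,n$ relabeling), and $(\phi_{(1)}-\phi_{(2)})^{m_1 m_2} = (1/\sigma_1 - 1/\sigma_2)^{n-1}$; the factor $\prod_i \phi_{(i)}^{m_i p}$ contributes powers of $\sigma_1$ and $\sigma_2$. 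Collecting all powers of $\sigma_1$, $\sigma_2$, and $(\sigma_1-\sigma_2)$ and all factorials, and folding in the constants generated by the column operations in the previous step, gives the stated expression for $1/K$. The main obstacle I anticipate is exactly this bookkeeping: tracking the powers of $\sigma_1$ and $\sigma_2$ and the combinatorial factorial products through both the determinant reduction and the constant $K$ from Lemma \ref{lemma: pdfquadraticforms}, so that the $\sigma$-exponents $n-\nmin+1$ and $(n-1)(\nmin-1)$ and the product $\prod_{\ell=2}^{\nmin-2}\ell!$ come out precisely; the structural part (reducing $|{\bm G}|$ to $|\bm{E}|$) is routine linear algebra once $d(i)$ and $e(i)$ are made explicit.
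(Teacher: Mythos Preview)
Your overall approach—specialize Lemma~\ref{lemma: pdfquadraticforms}—is exactly the paper's proof, which is the single line ``This is a particular case of Lemma~\ref{lemma: pdfquadraticforms}.'' So the strategy is correct.

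However, your parameter identification is off in a way that makes the computation harder and, as written, would not reproduce the stated constant. For $\bm W\sim\mathcal{CW}_M(n,\bm\Sigma)$ the covariance $\bm\Sigma$ is $M\times M$ (the statement lists exactly $M$ eigenvalues $\sigma_1,\dots,\sigma_M$), not $n\times n$. Writing $\bm W=\bm\Sigma^{1/2}\bm X\bm X^\dag\bm\Sigma^{1/2}$ with $\bm X$ of size $M\times n$, the nonzero eigenvalues of $\bm W$ coincide with those of $\bm X^\dag\bm\Sigma\bm X$; hence in Lemma~\ref{lemma: pdfquadraticforms} one should take the lemma's $p$ equal to $n$ and the lemma's $n$ equal to $M$. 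With this identification $\bm G$ is already $M\times M$ (there is no constant block of $n-M$ columns to collapse), the multiplicities are $m_1=1$ and $m_2=M-1$ (not $n-1$), and the exponent on $(\sigma_1-\sigma_2)$ in $1/K$ comes out as $m_1m_2=M-1$ directly, matching the statement. Your choice $m_2=n-1$ would instead produce $(\sigma_1-\sigma_2)^{n-1}$, which is wrong. Also, since $\sigma_1>\sigma_2$ implies $1/\sigma_2>1/\sigma_1$, the lemma's ordering gives $\phi_{(1)}=1/\sigma_2$ with multiplicity $M-1$ and $\phi_{(2)}=1/\sigma_1$ with multiplicity $1$, the reverse of what you wrote. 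Once these two points are fixed, $|\bm G|$ equals $|\bm E|$ up to a sign coming from row/column reordering and the factors $(-1)^{d(i)}$, and the remaining bookkeeping for $K$ is short.
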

\begin{proof}
This is a particular case of Lemma~\ref{lemma: pdfquadraticforms}.
\end{proof}

Using the \ac{p.d.f.} expression in lemma \ref{lemma: pdfquadraticforms}, The results of Sections \ref{sec:mainresults} and \ref{sec:subexpi1genord} can be easily specialized to the Wishart (or pseudo-Wishart) case. In particular, $M$, $N$,
${\bm \Phi({\bm x})}$, and ${\bm \Psi(\bm x)}$ in Theorems \ref{th:pdfith}-\ref{th:expectedLunordered}  must be replaced by $\nmin$, $p$, ${\bm V(\bm x)}$ and ${\bm G(x,\phi)}$ in Lemma \ref{lemma: pdfquadraticforms}.

\noindent For example, the elements of tensors in \eqref{eq:aijkfxi} of Lemma \ref{th:pdfith} and \eqref{eq:aijk_th3} of Lemma \ref{th:jointpdfellands}  can be written for the Wishart matrices as follows. 

\begin{itemize}
\item Tensor elements for the distribution of one eigenvalue for Wishart matrices, \eqref{eq:aijkfxi} of Lemma \ref{th:pdfith}:
  \begin{equation} \label{eq:aijkfxi_wishart}
  \begin{array}{c}
\quad  a_{i,j,k} \teq \begin{cases}
         (-1)^{d(j)} \phi_{(e(j))}^{-\omega_{i,j}}\Gamma\left(\omega_{i,j},x_{\ell} \phi(e(j))\right)  &  k < \ell  \le \nmin\\[\medskipamount]
                    x_{\ell}^{p-\nmin+\zeta_{i}}  \left(-x_{\ell} \right)^{d(j)} e^{-\phi_{(e(j))}}                      &  k=\ell \le \nmin \\[\medskipamount]
         (-1)^{d(j)} \phi_{(e(j))}^{-\omega_{i,j}}\gamma\left(\omega_{i,j},x_{\ell} \phi(e(j))\right)  &  \ell < k \le \nmin\\[\medskipamount] 
         0 & k> \nmin,  \quad i < k \\[\medskipamount]
         \left[n-j\right]_{d(i)} \phi_{(e(i))}^{n-j-d(i)} & k > \nmin, \quad i\ge k  \\[\medskipamount]     
                \end{cases}
                \end{array}
   \end{equation}
   
%
\item Tensor elements for the distribution of two eigenvalues for Wishart matrices, \eqref{eq:aijk_th3} of Lemma \ref{th:jointpdfellands}:

\begin{equation} \label{eq:aijk_th3_wishart}
\begin{array}{c}
\quad  a_{i,j,k} \teq \begin{cases}
                     (-1)^{d(j)} \phi_{(e(j))}^{-\omega_{i,j}}\Gamma\left(\omega_{i,j},x_{\ell} \phi(e(j))\right)  &  k < \ell  \le \nmin\\[\medskipamount]
                   x_{\ell}^{p-\nmin+\zeta_{i}}  \left(-x_{\ell} \right)^{d(j)} e^{-\phi_{(e(j))}}                      &  k=\ell \le \nmin \\[\medskipamount]
                   (-1)^{d(j)} \phi_{(e(j))}^{-\omega_{i,j}} \Gamma\left(\omega_{i,j},x_{s} \phi(e(j)),x_{\ell} \phi(e(j))\right)  &  \ell < k < s \le \nmin\\[\medskipamount]
                    x_{s}^{p-\nmin+\zeta_{i}}  \left(-x_{s} \right)^{d(j)} e^{-\phi_{(e(j))}}                     &  k=s \le \nmin \\[\medskipamount]
                    (-1)^{d(j)} \phi_{(e(j))}^{-\omega_{i,j}}\gamma\left(\omega_{i,j},x_{s} \phi(e(j))\right)   &  s < k \le  \nmin\\
                    0 & k> \nmin,  \quad i < k \\
                    \left[n-j\right]_{d(i)} \phi_{(e(i))}^{n-j-d(i)} & k > \nmin, \quad i\ge k                  
                \end{cases}\\
                \end{array}
\end{equation}
where \begin{equation}
\label{eq:zeta}\zeta_i\triangleq
\begin{cases}
i-1 & i \le \nmin\\
0 & i > \nmin
\end{cases}
\end{equation}
$\omega_{i,j}\triangleq p-\nmin+\zeta_i+d(j)+1$, the upper and lower incomplete Gamma functions are indicated as $\Gamma(\cdot,\cdot)$ and $\gamma(\cdot,\cdot)$, respectively, 
and 
$\Gamma(n,x_1,x_2)\triangleq \Gamma(n, x_1)-\Gamma(n, x_2)$ \cite{AbrSte:B70}. 

\item Tensor elements for the distribution of one eigenvalue for Wishart matrices with spiked covariance matrix, \eqref{eq:aijkfxi} of Lemma \ref{th:pdfith}:
\begin{equation} \label{eq:aijkfxi_spiked_quat}
\begin{array}{c}
\quad  a_{i,j,k} \teq (-1)^{i-1}\begin{cases}
 \sigma_1^{\theta_i} \Gamma(\theta_i,x_\ell/\sigma_1) & k < \ell\quad j=1\\
\sigma_2^{\varrho_{i,j}}\Gamma(\varrho_{i,j},x_\ell/\sigma_2) & k < \ell\quad j>1\\
 x_\ell^{\theta_i-1}  e^{x_\ell/\sigma_1} & k=\ell\quad j=1\\
 x_\ell^{\varrho_{i,j}-1}  e^{-x_\ell/\sigma_2} & k=\ell\quad j>1\\
\sigma_1^{\theta_i} \gamma(\theta_i,x_\ell/\sigma_1) & k > \ell\quad j=1\\
 \sigma_2^{\varrho_{i,j}}\gamma(\varrho_{i,j},x_\ell/\sigma_2)  & k > \ell\quad j>1\\
\end{cases}\\
\end{array}
\end{equation}
where  $\theta_i \triangleq n-\nmin+i$ and $\varrho_{i,j} \triangleq n+i-j$.
\item Tensor elements for the distribution of two eigenvalues for Wishart matrices with spiked covariance matrix, \eqref{eq:aijk_th3} of Lemma \ref{th:jointpdfellands}:

\begin{equation} \label{eq:aijk_spiked2_esa}
a_{i,j,k} \teq (-1)^{i-1} \begin{cases}
\displaystyle  \sigma_1^{\theta_i}\Gamma(\theta_i,x_\ell/\sigma_1) &  k < \ell  \le M \quad j=1\\
\displaystyle \sigma_2^{\varrho_{i,j}}\Gamma(\varrho_{i,j},x_\ell/\sigma_2) &  k < \ell  \le M \quad j>1\\
\displaystyle   x_\ell^{\theta_i-1}  e^{-x_\ell/\sigma_1}                         &  k=\ell \le M \quad j=1 \\
\displaystyle   x_\ell^{\varrho_{i,j}-1}   e^{-x_\ell/\sigma_2}                       &  k=\ell \le M \quad j>1 \\
\displaystyle  \sigma_1^{\theta_i}\Gamma(\theta_i,x_s/\sigma_1,x_\ell/\sigma_1) &  \ell < k < s \le M \quad j=1 \\
\displaystyle \sigma_2^{\varrho_{i,j}}\Gamma(\varrho_{i,j},x_s/\sigma_2,x_\ell/\sigma_2) &  \ell < k < s \le M \quad j>1 \\
\displaystyle   x_s^{\theta_i-1}  e^{-x_s/\sigma_1}                       &  k=s \le M \quad j=1 \\
\displaystyle   x_s^{\varrho_{i,j}-1}  e^{-x_s/\sigma_2}                       &  k=s \le M \quad j>1 \\
\displaystyle  \sigma_1^{\theta_i}\gamma(\theta_i,x_s/\sigma_1) &  s < k \le  M \quad j=1 \\
\displaystyle  \sigma_2^{\varrho_{i,j}}\gamma(\varrho_{i,j},x_s/\sigma_2) &  s < k \le  M \quad j>1. \\
\end{cases}
\end{equation}

\end{itemize}

\subsection{Numerical examples}
We present some numerical examples related to the \ac{p.d.f.} of the $\ith{i}$ largest eigenvalue and the joint \ac{p.d.f.} of the $\ith{i}$ and $\ith{j}$ ordered eigenvalues of a central Wishart matrix. 
For the sake of conciseness we only show results for the uncorrelated and for the spiked correlated Wishart cases. 

Fig.~\ref{fig:pdflambdai} shows the \acp{p.d.f.} of the various ordered eigenvalues $\lambda_1,\ldots,\lambda_4$, of the uncorrelated Wishart matrix with $M=4$ and $n=5$. The curves have been  obtained from Lemma~\ref{th:pdfith} and \eqref{eq:aijkfxi}, where, starting from \eqref{eq:jpdfuncorrnodetcomplex}  we get for the uncorrelated Wishart $\Psi_i(x)=\Phi_i(x)=x^{i-1}$ and $\xi(x)=x^{n-M} e^{-x}$. Note that the integrals in  \eqref{eq:aijkfxi} are in this case expressible in terms of gamma functions. 

To show the effect of correlation, in Fig.~\ref{fig:pdflambdaispiked1} the \acp{p.d.f.} of the various ordered eigenvalues $\lambda_1,\ldots,\lambda_4$ of the spiked correlated Wishart matrix with $M=4$ and $n=5$ are reported. The correlation matrix here has eigenvalues $\sigma_{1}=10, \sigma_{2}=\sigma_{3}=\sigma_{4}=1$. One of the effects of a spiked correlation is to increase the expected value and variance of the largest eigenvalue, as can be seen from Fig.~\ref{fig:pdflambdaispiked1}.  

In Fig.~\ref{fig:pdflambdainmin6nmax10} and Fig.~\ref{fig:pdflambdaispikednmin6nmax10} we report the eigenvalues distribution for $M=6$ and $n=10$, with and without correlation. 
To allow a comparison with the uncorrelated case the same scale is kept. For this reason, only a part of the left tail of the distribution of the largest eigenvalue is visible.

\AZ{One of the possible applications of the expression for the marginal distribution of single eigenvalues is in the field of the performance analysis of communications systems characterized by the presence of \ac{MIMO} systems, characterized by the presence of multiple antennas at both transmit and receive side. More specifically, in the \ac{MIMO} scheme denoted as \ac{SVD} \ac{MIMO}, the symbol error probability associated at each eigen-channel depends on the value of the correspondent eigenvalue of the \ac{MIMO} channel matrix, which is typically modeled as a Wishart \cite{Tel:99}. 
}

In Fig.~\ref{fig:pdflambdainmin6GUE} we report the eigenvalues distribution for $M=6$ for the Gaussian unitary ensemble. The curves have been  obtained from Lemma~\ref{th:pdfith} and \eqref{eq:aijkfxi}, where, starting from \eqref{eq:jpdfuncorrGUE}  we get for the GUE $\Psi_i(x)=\Phi_i(x)=x^{i-1}$ and $\xi(x)=e^{-x^2}$. Note that even in this case the integrals in  \eqref{eq:aijkfxi} are expressible in terms of gamma functions.  
%

With reference to the joint distribution of two eigenvalues, we report in Figs.~\ref{fig:pdflambda12}-\ref{fig:pdflambda34} the joint \ac{p.d.f.} for all possible couples of eigenvalues in the case of uncorrelated Wishart matrix with $M=4$ and $n=5$. The surfaces have been obtained from Lemma~\ref{th:jointpdfellands} and \eqref{eq:aijk_th3} with $\Psi_i(x)=\Phi_i(x)=x^{i-1}$ and $\xi(x)=x^{n-M} e^{-x}$. Even in this case the integrals in  \eqref{eq:aijk_th3} can be expressed in terms of gamma functions.

\AZ{Note that, as shown in \eqref{eq:defdetdetAbis}, the computation of the operator  ${\mathcal{T}}\left({\bm A}\right)$ requires the evaluation of $N!$ determinants of $(N \times N)$ matrices; this number can make the operation impractical for large values of $N$.  Furtunately, when dealing with the evaluation of the distribution of subsets of the eigenvalues,  the elements of the rank 3 tensor present some regularity patterns that can be exploited to simplify the evaluation of  ${\cal T}({\bf A})$. In particular, the matrix to be evaluated does not change for some kinds of permutations; therefore, we can group the N! permutations so that each group contains permutations that provide the same determinant. This latter consideration leads to a  significant reduction of the number of determinants to be computed. 	More specifically, for the derivation of the p.d.f. of the $\ith{\ell}$ ordered eigenvalue, the number of determinants reduces from $N!$ to 
\begin{equation}
\frac{N!}{(\ell-1)! (M-\ell)!}.
\end{equation}
For the case of the derivation of the joint p.d.f. of the $\ith{\ell}$ and $\ith{s}$ (with $s > \ell$) ordered eigenvalues, the number of permutations reduces to 
\begin{equation}
\frac{N!}{(\ell-1)! (s-\ell-1)! (M-s)!}.
\end{equation}
The procedure can be easily generalized to the case of joint p.d.f. of $L$ ordered eigenvalues; in this case the number of the determinants reduces to
\begin{equation}
	\frac{N!}{(M-i_L)!\,\, \prod_{m=1}^{L} (i_{m}-i_{m-1}-1)!}
\end{equation}
where $i_0=0$. It is worth noting that these results  hold for all the matrices whose joint p.d.f. of the eigenvalues takes the form in (2.1); therefore,  this approach can be applied to a very general class of matrices, like, for instance, Wishart, Hermitian Gaussian, Multivariate beta.
}


\section{Conclusions}
%
In this paper, we focused on the random matrices whose joint distribution of the eigenvalues can be written in the form \eqref{eq:genericpdfproddet} and proposed a unified framework for the derivation of the marginal distribution of the eigenvalues, some related moments, and the joint distribution of an arbitrary subset of ordered eigenvalues.  The results can be applied to the case of both central (uncorrelated or correlated, including the spiked model) and noncentral uncorrelated Wishart matrices, double Wishart (beta) matrices, as well as to \ac{GUE}, and can be used to address many aspects of interest for wireless communications, radar signal processing, and physics. 

\section*{Acknowledgments}
This work was supported in part by the Italian Ministry for Education, University and Research (MIUR) under the program Dipartimenti di Eccellenza (2018-2022).


\bibliographystyle{IEEEtran}
\bibliography{IEEEabrv,BibEIGEN,BibMIMO,MIMO,MyBooks,BiblioMCCV,RandomMatrix}

\begin{thebibliography}{10}
\providecommand{\url}[1]{#1}
\csname url@samestyle\endcsname
\providecommand{\newblock}{\relax}
\providecommand{\bibinfo}[2]{#2}
\providecommand{\BIBentrySTDinterwordspacing}{\spaceskip=0pt\relax}
\providecommand{\BIBentryALTinterwordstretchfactor}{4}
\providecommand{\BIBentryALTinterwordspacing}{\spaceskip=\fontdimen2\font plus
\BIBentryALTinterwordstretchfactor\fontdimen3\font minus
  \fontdimen4\font\relax}
\providecommand{\BIBforeignlanguage}[2]{{%
\expandafter\ifx\csname l@#1\endcsname\relax
\typeout{** WARNING: IEEEtran.bst: No hyphenation pattern has been}%
\typeout{** loaded for the language `#1'. Using the pattern for}%
\typeout{** the default language instead.}%
\else
\language=\csname l@#1\endcsname
\fi
#2}}
\providecommand{\BIBdecl}{\relax}
\BIBdecl

\bibitem{And:03}
T.~W. Anderson, \emph{An Introduction to Multivariate Statistical
  Analysis}.\hskip 1em plus 0.5em minus 0.4em\relax New York: Wiley, 2003.

\bibitem{Mui:B82}
R.~J. Muirhead, \emph{Aspects of Multivariate Statistical Theory},
  1st~ed.\hskip 1em plus 0.5em minus 0.4em\relax New York, NY: John Wiley \&
  Sons, Inc, 1982.

\bibitem{Meh:04}
M.~L. Mehta, \emph{Random Matrices}, 3rd~ed.\hskip 1em plus 0.5em minus
  0.4em\relax Boston, MA: Academic Press, 2004.

\bibitem{For:B10}
P.~J. Forrester, \emph{Log-gases and random matrices}.\hskip 1em plus 0.5em
  minus 0.4em\relax Princeton University Press, 2010.

\bibitem{Handbook:11}
G.~Akemann, J.~Baik, and P.~D. Francesco, Eds., \emph{{The Oxford Handbook of
  Random Matrix Theory}}.\hskip 1em plus 0.5em minus 0.4em\relax Oxford
  University Press, 2011.

\bibitem{Wint:87}
J.~H. Winters, ``On the capacity of radio communication systems with diversity
  in {Rayleigh} fading environment,'' \emph{IEEE J. Select. Areas Commun.},
  vol. SAC-5, no.~5, pp. 871--878, Jun. 1987.

\bibitem{Ede:88}
A.~Edelman, ``Eigenvalues and condition numbers of random matrices,''
  \emph{SIAM Journal on Matrix Analysis and Applications}, vol. 1988, pp.
  543--560, 1988.

\bibitem{Fos:96}
G.~J. Foschini, ``Layered space-time architecture for wireless communication a
  fading environment when using multiple antennas,'' \emph{Bell Labs Tech. J.},
  vol.~1, no.~2, pp. 41--59, Autumn 1996.

\bibitem{Tel:99}
E.~Telatar, ``Capacity of multi-antenna {G}aussian channels,'' \emph{Europ.
  Trans. on Telecomm.}, vol.~10, no.~6, pp. 585--595, Nov.-Dec. 1999.

\bibitem{Joh:01}
I.~Johnstone, ``On the distribution of the largest eigenvalue in principal
  components analysis,'' \emph{The Annals of Statistics}, vol.~29, no.~2, pp.
  295--327, 2001.

\bibitem{ChiWinZan:J03}
M.~Chiani, M.~Z. Win, and A.~Zanella, ``On the capacity of spatially correlated
  {MIMO} {Rayleigh} fading channels,'' \emph{IEEE Trans. Inform. Theory},
  vol.~49, no.~10, pp. 2363--2371, Oct. 2003.

\bibitem{SmiRoySha:J03}
P.~J. Smith, S.~Roy, and M.~Shafi, ``Capacity of {MIMO} systems with
  semicorrelated flat fading,'' \emph{IEEE Trans. Inform. Theory}, vol.~49,
  no.~10, pp. 2781--2788, Oct. 2003.

\bibitem{ShiWinLeeChi:J05}
H.~Shin, M.~Win, J.~H. Lee, and M.~Chiani, ``On the capacity of doubly
  correlated {MIMO} channels,'' \emph{{IEEE} Trans. Wireless Commun.}, vol.~5,
  no.~8, pp. 2253--2266, Aug. 2006.

\bibitem{CanTao:05}
E.~J. Cand{\`e}s and T.~Tao, ``Decoding by linear programming,'' \emph{IEEE
  Trans. Inform. Theory}, vol.~51, no.~12, pp. 4203--4215, Dec 2005.

\bibitem{PenGar:09}
F.~Penna, R.~Garello, and M.~A. Spirito, ``Cooperative spectrum sensing based
  on the limiting eigenvalue ratio distribution in {Wishart} matrices,''
  \emph{IEEE Commun. Lett.}, vol.~13, no.~7, pp. 507--509, 2009.

\bibitem{CheMcK:12}
Y.~Chen and M.~McKay, ``Coulumb fluid, {Painlev\'e} transcendents, and the
  information theory of {MIMO} systems,'' \emph{IEEE Trans. Inform. Theory},
  vol.~58, no.~7, pp. 4594--4634, July 2012.

\bibitem{May:72}
R.~M. {May}, ``{Will a Large Complex System be Stable?}'' \emph{Nature}, vol.
  238, pp. 413--414, Aug. 1972.

\bibitem{AazEas:06}
A.~Aazami and R.~Easther, ``{Cosmology from random multifield potentials},''
  \emph{Journal of Cosmology and Astroparticle Physics}, vol. 0603, p. 013,
  2006.

\bibitem{DeaMaj:08}
D.~S. {Dean} and S.~N. {Majumdar}, ``{Extreme value statistics of eigenvalues
  of Gaussian random matrices},'' \emph{Physical Review E}, vol.~77, no.~4, p.
  041108, Apr. 2008.

\bibitem{MarMca:13}
M.~C.~D. {Marsh}, L.~{McAllister}, E.~{Pajer}, and T.~{Wrase}, ``{Charting an
  Inflationary Landscape with Random Matrix Theory},'' \emph{Journal of
  Cosmology and Astroparticle Physics}, vol.~11, p.~40, Nov. 2013.

\bibitem{Chi:17}
M.~Chiani, ``{On the probability that all eigenvalues of Gaussian, Wishart, and
  double Wishart random matrices lie within an interval},'' \emph{IEEE
  Transactions on Information Theory}, vol.~63, no.~7, pp. 4521--4531, Jul
  2017.

\bibitem{Don:06}
D.~L. Donoho, ``Compressed sensing,'' \emph{IEEE Trans. Inform. Theory},
  vol.~52, no.~4, pp. 1289--1306, 2006.

\bibitem{Can:08}
E.~J. Cand{\`e}s and M.~B. Wakin, ``An introduction to compressive sampling,''
  \emph{Signal Processing Magazine, IEEE}, vol.~25, no.~2, pp. 21--30, 2008.

\bibitem{ElzGioChi:18}
A.~Elzanaty, A.~Giorgetti, and M.~Chiani, ``{Limits on Sparse Data Acquisition:
  RIC Analysis of Finite Gaussian Matrices},'' \emph{IEEE Transactions on
  Information Theory}, pp.~--, 2018, to appear.

\bibitem{DedMal:07}
J.-P. {Dedieu} and G.~{Malajovich}, ``{On the number of minima of a random
  polynomial},'' \emph{ArXiv Mathematics e-prints}, Feb. 2007.

\bibitem{WaxKai:85}
M.~Wax and T.~Kailath, ``Detection of signals by information theoretic
  criteria,'' \emph{Acoustics, Speech, and Signal Processing [see also IEEE
  Transactions on Signal Processing], IEEE Transactions on}, vol.~33, no.~2,
  pp. 387--392, Apr 1985.

\bibitem{CheWicAdv:01}
P.~Chen, M.~Wicks, and R.~Adve, ``Development of a statistical procedure for
  detecting the number of signals in a radar measurement,'' \emph{Radar, Sonar
  and Navigation, IEE Proceedings -}, vol. 148, no.~4, pp. 219--226, Aug 2001.

\bibitem{ChiWin:C10}
M.~Chiani and M.~Win, ``Estimating the number of signals observed by multiple
  sensors,'' in \emph{IEEE 2010 IAPR Workshop on Cognitive Information
  Processing (CIP2010)}, Elba Island, Italy, Jun. 2010, pp. 156--161.

\bibitem{PenGarSpi:09}
F.~Penna, R.~Garello, and M.~Spirito, ``Cooperative spectrum sensing based on
  the limiting eigenvalue ratio distribution in {W}ishart matrices,''
  \emph{{IEEE} Commun. Lett.}, Jul. 2009.

\bibitem{KoRaSeCa:11}
A.~Kortun, T.~Ratnarajah, M.~Sellathurai, C.~Zhong, and C.~Papadias, ``On the
  performance of eigenvalue-based cooperative spectrum sensing for cognitive
  radio,'' \emph{IEEE J. Sel. Topics in Signal Proc.}, vol.~5, no.~1, pp. 49 --
  55, Feb. 2011.

\bibitem{MaMcSmNo:10}
M.~Matthaiou, M.~Mckay, P.~Smith, and J.~Nossek, ``On the condition number
  distribution of complex {W}ishart matrices,'' \emph{{IEEE} Trans. Commun.},
  vol.~58, no.~6, pp. 1705 --1717, Jun. 2010.

\bibitem{ZhMcRaWo:11}
C.~Zhong, M.~McKay, T.~Ratnarajah, and K.-K. Wong, ``Distribution of the
  {D}emmel condition number of {W}ishart matrices,'' \emph{{IEEE} Trans.
  Commun.}, vol.~59, no.~5, pp. 1309 --1320, May 2011.

\bibitem{MarGioChi:J15}
A.~Mariani, A.~Giorgetti, and M.~Chiani, ``Model order selection based on
  information theoretic criteria: Design of the penalty,'' \emph{{IEEE} Trans.
  Signal Processing}, vol.~63, no.~11, pp. 2779--2789, June 2015.

\bibitem{MarGioChi:J15a}
------, ``Wideband spectrum sensing by model order selection,'' \emph{IEEE
  Trans. Wireless Commun.}, vol.~14, no.~12, pp. 6710--6721, Dec 2015.

\bibitem{Hay:08}
S.~S. Haykin, \emph{Adaptive filter theory}.\hskip 1em plus 0.5em minus
  0.4em\relax Pearson Education, 2008.

\bibitem{TraWid:09}
C.~Tracy and H.~Widom, ``The distributions of random matrix theory and their
  applications,'' \emph{New Trends in Mathematical Physics}, pp. 753--765,
  2009.

\bibitem{DhNaSh:19}
P.~Dharmawansa, B.~Nadler, and O.~Shwartz, ``Roy's largest root under rank-one
  perturbations: The complex valued case and applications,'' \emph{Journal of
  Multivariate Analysis}, vol. 174, november 2019.

\bibitem{Kha:64}
C.~G. Khatri, ``Distribution of the largest or the smallest characteristic root
  under null hyphotesis concerning complex multivariate normal populations,''
  \emph{Ann. Math. Stat.}, vol.~35, pp. 1807--18\,102, 1964.

\bibitem{Kha:69}
------, ``Non-central distribution of i-th largest characteristic roots of
  three matrices concerning complex multivariate multivariate normal
  populations,'' \emph{Journal of Institute of Ann. Statistical Math.},
  vol.~21, pp. 23--32, 1969.

\bibitem{DigMalJam:J03}
P.~Dighe, R.~Mallik, and S.~Jamuar, ``Analysis of transmit-receive diversity in
  {Rayleigh} fading,'' \emph{IEEE Trans. Commun.}, vol.~51, no.~4, pp.
  694--703, Apr. 2003.

\bibitem{MaaAis:05}
A.~Maaref and S.~Aissa, ``Closed-form expressions for the outage and ergodic
  {S}hannon capacity of {MIMO MRC} systems,'' \emph{IEEE Trans. Comm.},
  vol.~53, no.~7, pp. 1092--1095, Jul. 2005.

\bibitem{AlTuLoVe:04}
G.~Alfano, A.~Tulino, A.~Lozano, and S.~Verdu, ``Capacity of {MIMO} channels
  with one-sided correlation,'' in \emph{IEEE Eighth Intern. Symp.on Spread
  Spectrum Techniques and Applications (ISSTA 2004)}, Aug. 2004, pp. 515--519.

\bibitem{JiMcGaCo:J08}
S.~Jin, M.~McKay, X.~Gao, and I.~Collings, ``{MIMO} multichannel beamforming:
  {SER} and outage using new eigenvalue distributions of complex noncentral
  {W}ishart matrices,'' \emph{IEEE Trans. Comm.}, vol.~56, no.~3, pp. 424--434,
  Mar. 2008.

\bibitem{KwaLeuHo:07}
R.~Kwan, C.~Leung, and P.~Ho, ``Distribution of ordered eigenvalues of
  {W}ishart matrices,'' \emph{IEE Electronic Letters}, vol.~43, no.~5, pp.
  31--32, Mar. 2007.

\bibitem{ChiZan:C08}
M.~Chiani and A.~Zanella, ``Joint distribution of an arbitrary subset of the
  ordered eigenvalues of {Wishart} matrices,'' in \emph{Proc. IEEE Int. Symp.
  on Personal, Indoor and Mobile Radio Commun.}, Cannes, France, Sep. 2008, pp.
  1--6, invited Paper.

\bibitem{ZanChiWin:J09}
A.~Zanella, M.~Chiani, and M.~Z. Win, ``On the marginal distribution of the
  eigenvalues of {W}ishart matrices,'' \emph{IEEE Trans. Commun.}, vol.~57,
  no.~4, pp. 1050--1060, Apr. 2009.

\bibitem{ZanChi:J12}
A.~Zanella and M.~Chiani, ``Reduced complexity power allocation strategies for
  {MIMO} systems with singular value decomposition,'' \emph{IEEE Trans. Veh.
  Technol.}, vol.~61, no.~9, pp. 4031 --4041, nov. 2012.

\bibitem{Chi:J14}
M.~Chiani, ``{Distribution of the largest eigenvalue for real Wishart and
  Gaussian random matrices and a simple approximation for the Tracy-Widom
  distribution},'' \emph{Journal of Multivariate Analysis}, vol. 129, pp. 69 --
  81, 2014.

\bibitem{Chi:J16}
------, ``{Distribution of the largest root of a matrix for Roy's test in
  multivariate analysis of variance},'' \emph{Journal of Multivariate
  Analysis}, vol. 143, pp. 467--471, 2016, also in arxiv, 2014.

\bibitem{MarAis:J07}
A.~Maaref and S.~Aissa, ``Joint and marginal eigenvalue distributions of
  (non)central complex {W}ishart matrices and {PDF}-based approach for
  characterizing the capacity statistics of {MIMO} {R}icean and {R}ayleigh
  fading channels,'' \emph{IEEE Trans. Wireless Comm.}, vol.~6, no.~10, pp.
  3607--3619, Oct. 2007.

\bibitem{OrPaFo:09}
L.~Ordoez, D.~Palomar, and J.~Fonollosa, ``Ordered eigenvalues of a general
  class of hermitian random matrices with application to the performance
  analysis of {MIMO} systems,'' \emph{{IEEE} Trans. Signal Processing},
  vol.~57, no.~2, pp. 672 --689, Feb. 2009.

\bibitem{McKColSmi:06}
M.~McKay, I.~Collings, and P.~Smith, ``Capacity and {SER} analysis of {MIMO}
  beamforming with {MRC},'' in \emph{Proc. IEEE Int. Conf. on Commun.}, vol.~3,
  Istambul, Turkey, Jun. 2006, pp. 1326--1330.

\bibitem{BaiSil:06}
Z.~Bai and J.~W. Silverstein, \emph{Spectral Analysis of Large Dimensional
  Random Matrices}.\hskip 1em plus 0.5em minus 0.4em\relax Science Press, 2006,
  2006.

\bibitem{Nad:08}
B.~Nadler, ``Finite sample approximation results for principal component
  analysis: a matrix perturbation approach,'' \emph{The Annals of Statistics},
  vol.~36, no.~6, pp. pp. 2791--2817, 2008.

\bibitem{JohNad:17}
I.~Johnstone and B.~Nadler, ``Roy's largest root test under rank-one
  alternatives,'' \emph{Biometrika}, vol. 104, no.~1, pp. 181--193, 2017.

\bibitem{ChiWinShi:J10}
M.~Chiani, M.~Z. Win, and H.~Shin, ``{MIMO} networks: the effects of
  interference,'' \emph{IEEE Trans. Inform. Theory}, vol.~56, no.~1, pp.
  336--349, Jan. 2010.

\bibitem{JinMcKGaoCol:J08b}
S.~Jin, M.~R. McKay, X.~Gao, and I.~B. Collings, ``{MIMO} multichannel
  beamforming: {SER} and outage using new eigenvalue distributions of complex
  noncentral {W}ishart matrices,'' \emph{IEEE Trans. Commun.}, vol.~56, no.~3,
  pp. 424--434, 2008.

\bibitem{MaaAis:J07}
A.~Maaref and S.~Aissa, ``Eigenvalue distributions of {Wishart-type random
  matrices with application to the performance analysis of MIMO MRC} systems,''
  \emph{IEEE Trans. Wireless Commun.}, vol.~6, no.~7, pp. 2678--2689, Jul.
  2007.

\bibitem{AbrSte:B70}
M.~Abramowitz and I.~A. Stegun, \emph{Handbook of Mathematical Functions wih
  Formulas, Graphs, and Mathematical Tables}.\hskip 1em plus 0.5em minus
  0.4em\relax Washington, D.C.: United States Department of Commerce, 1970.

\end{thebibliography}

\newpage


\begin{figure}[h]
\centerline{\includegraphics[width=0.8\columnwidth,draft=false]{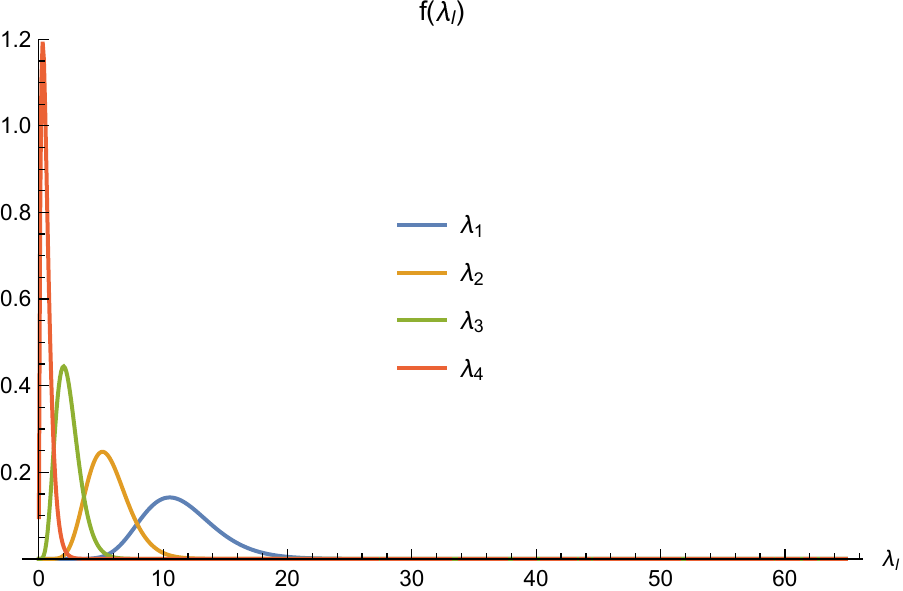}}
\caption{Marginal \acl{p.d.f.} of each ordered eigenvalue for the uncorrelated central Wishart matrix with $M=4$ and $n=5$. The correlation matrix here has eigenvalues $\sigma_{1}= \sigma_{2}=\sigma_{3}=\sigma_{4}=1$.}
\label{fig:pdflambdai}
\end{figure}

\begin{figure}[h]
\centerline{\includegraphics[width=0.8\columnwidth,draft=false]{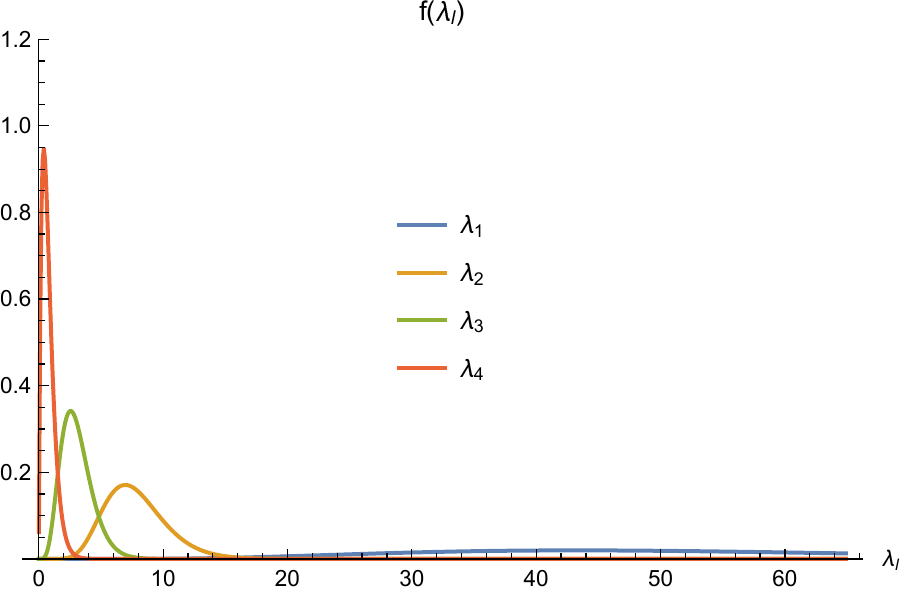}}
\caption{Marginal \acl{p.d.f.} of each ordered eigenvalue for the spiked correlated central Wishart matrix with $M=4$ and $n=5$. The correlation matrix here has eigenvalues $\sigma_{1}=10, \sigma_{2}=\sigma_{3}=\sigma_{4}=1$. }
\label{fig:pdflambdaispiked1}
\end{figure}

\begin{figure}[h]
\centerline{\includegraphics[width=0.8\columnwidth,draft=false]{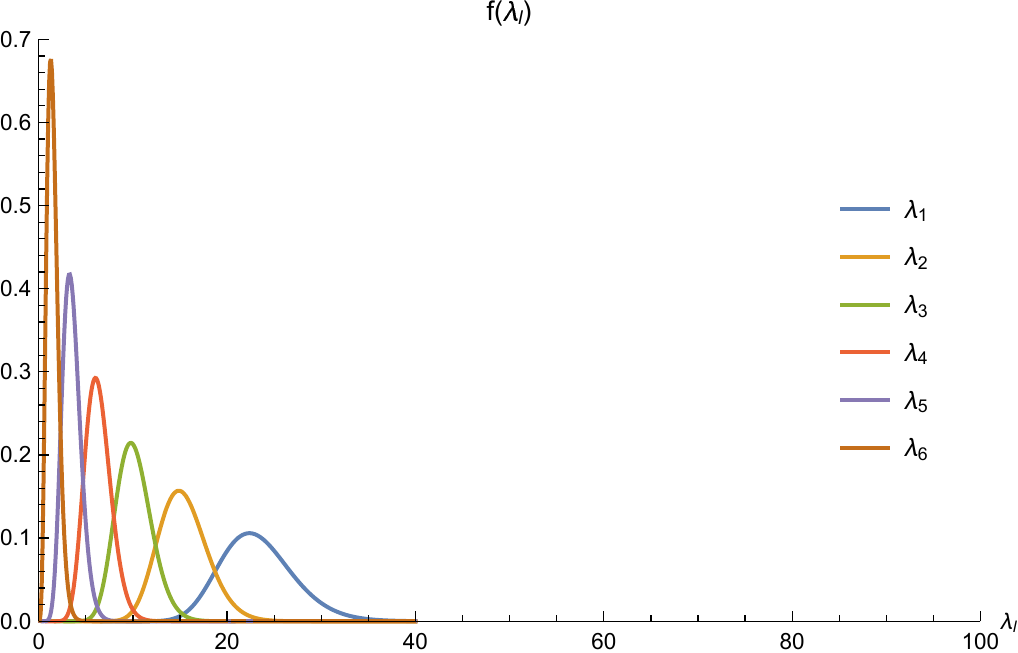}}
\caption{Marginal \acl{p.d.f.} of each ordered eigenvalue for the uncorrelated central Wishart matrix with $M=6$ and $n=10$. The correlation matrix here has eigenvalues $\sigma_{i}=1, \quad i=1, \ldots, 6$.}
\label{fig:pdflambdainmin6nmax10}
\end{figure}

\begin{figure}[h]
\centerline{\includegraphics[width=0.8\columnwidth,draft=false]{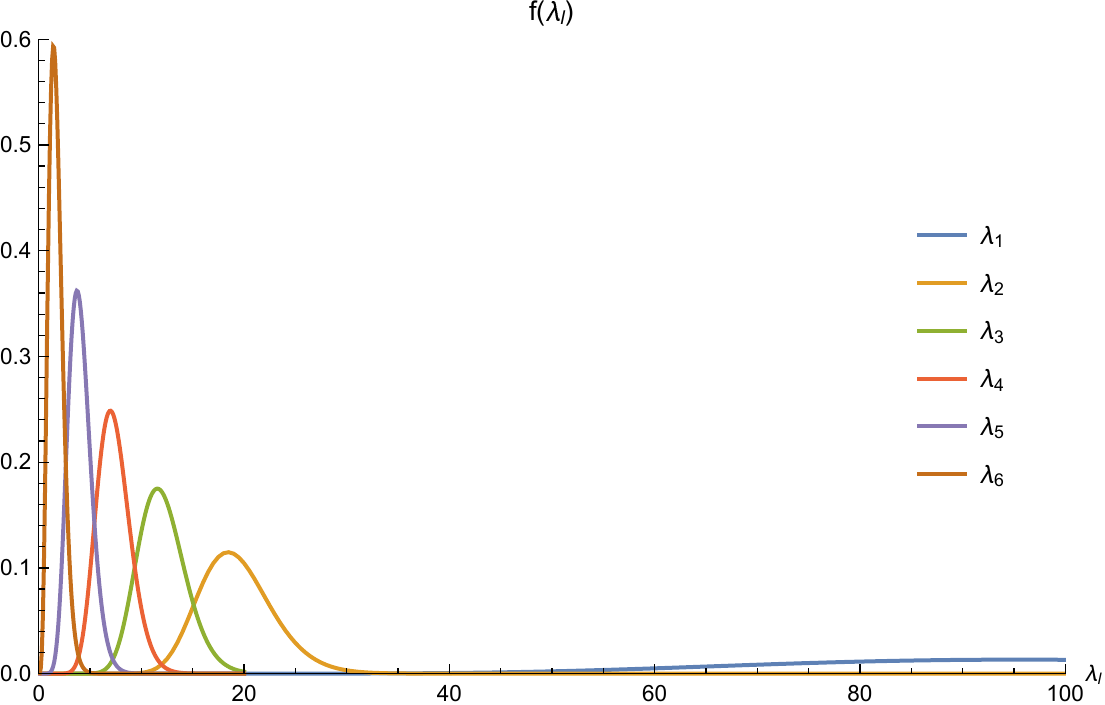}}
\caption{Marginal \acl{p.d.f.} of each ordered eigenvalue for the spiked correlated central Wishart matrix with $M=6$ and $n=10$. For $\lambda_{1}$ just the left tail is visible on this scale. The correlation matrix here has eigenvalues $\sigma_{1}=10$, $\sigma_{i}=1, \quad i=2, \ldots, 6$. }
\label{fig:pdflambdaispikednmin6nmax10}
\end{figure}

\begin{figure}[h]
\centerline{\includegraphics[width=0.8\columnwidth,draft=false]{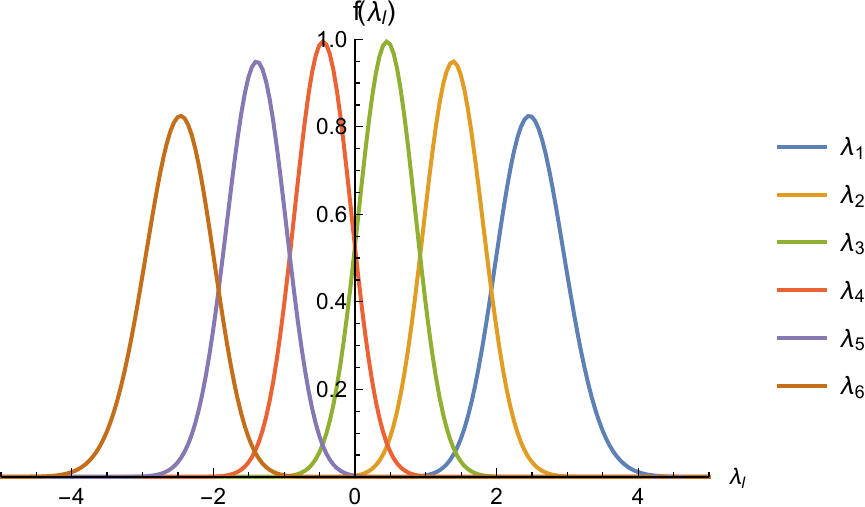}}
\caption{Marginal \acl{p.d.f.} of each ordered eigenvalue for the GUE with $M=6$. }
\label{fig:pdflambdainmin6GUE}
\end{figure}

\begin{figure}[h!]
\psfrag{yy}{}
\psfrag{l1}{$\lambda_1$}
\psfrag{l2}{$\lambda_2$}
\psfrag{l3}{$\lambda_3$}
\psfrag{l4}{$\lambda_4$}
\centerline{\includegraphics[width=7.5cm,draft=false]
    {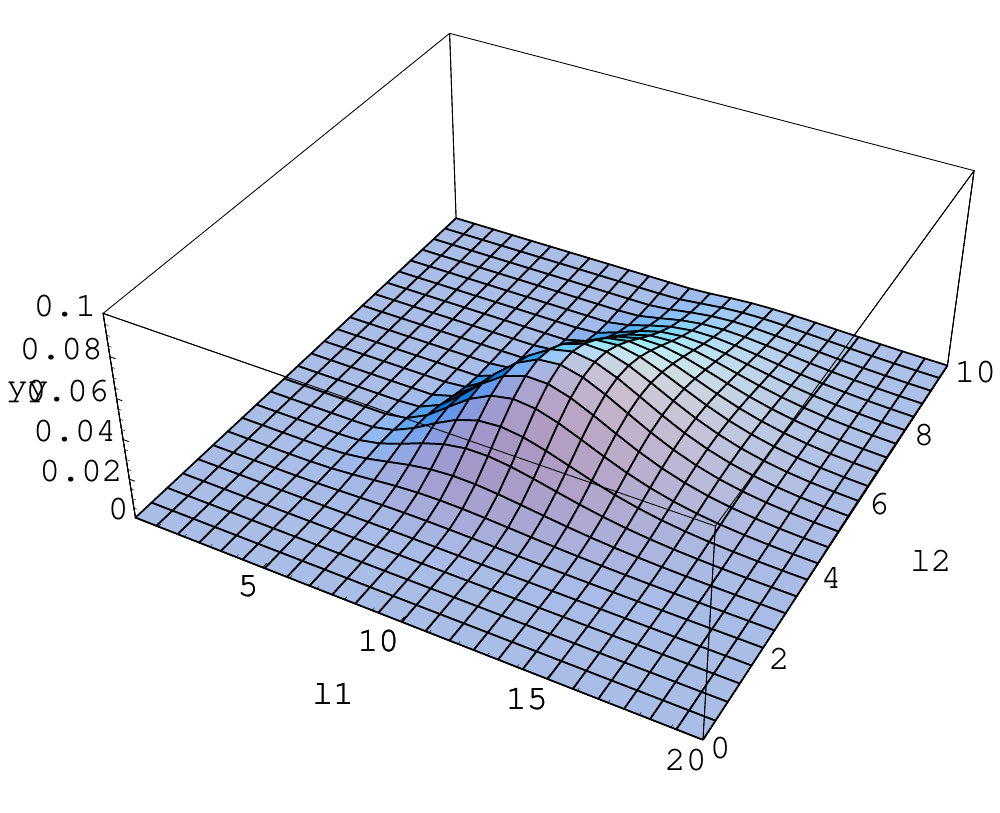}}
\caption{Joint probability density function of $\lambda_1$ and $\lambda_2$ of the uncorrelated central Wishart matrix with $M=4$ and $n=5$.} 
\label{fig:pdflambda12}
\end{figure}

\begin{figure}[h!]
\psfrag{yy}{}
\psfrag{l1}{$\lambda_1$}
\psfrag{l2}{$\lambda_2$}
\psfrag{l3}{$\lambda_3$}
\psfrag{l4}{$\lambda_4$}
\centerline{\includegraphics[width=7.5cm,draft=false]
    {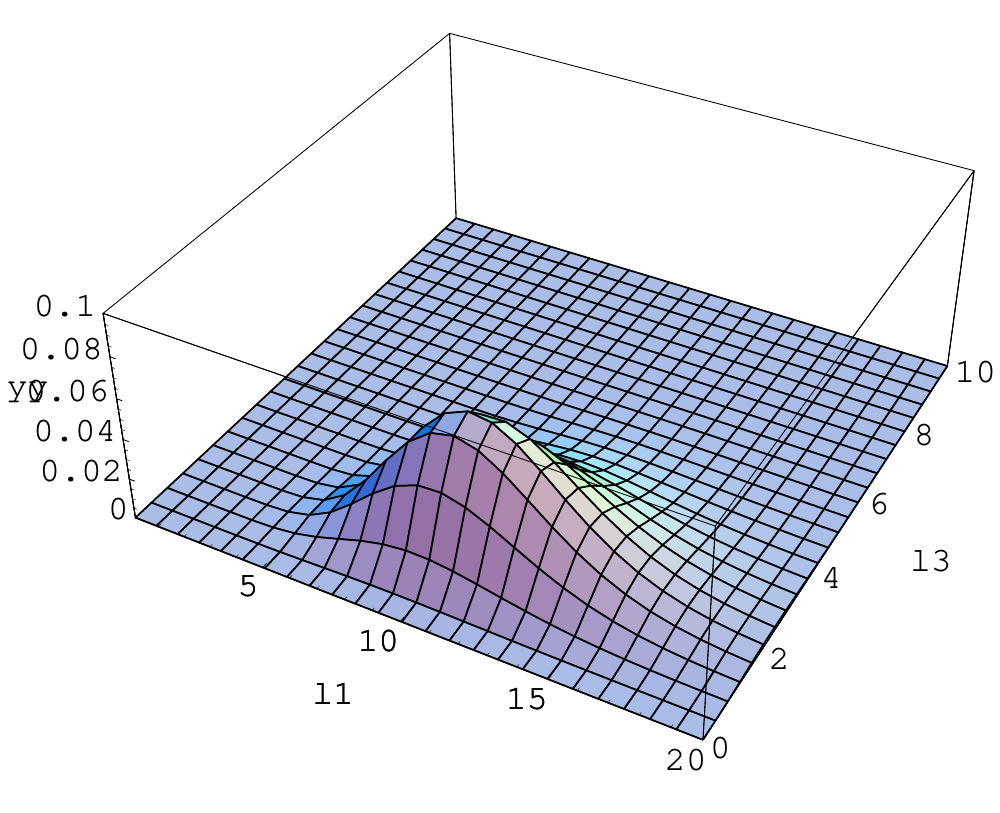}}
\caption{Joint probability density function of $\lambda_1$ and $\lambda_3$ of the uncorrelated central Wishart matrix with $M=4$ and $n=5$.} 
\label{fig:pdflambda13}
\end{figure}

\begin{figure}[h!]
\psfrag{yy}{}
\psfrag{l1}{$\lambda_1$}
\psfrag{l2}{$\lambda_2$}
\psfrag{l3}{$\lambda_3$}
\psfrag{l4}{$\lambda_4$}
\centerline{\includegraphics[width=7.5cm,draft=false]
    {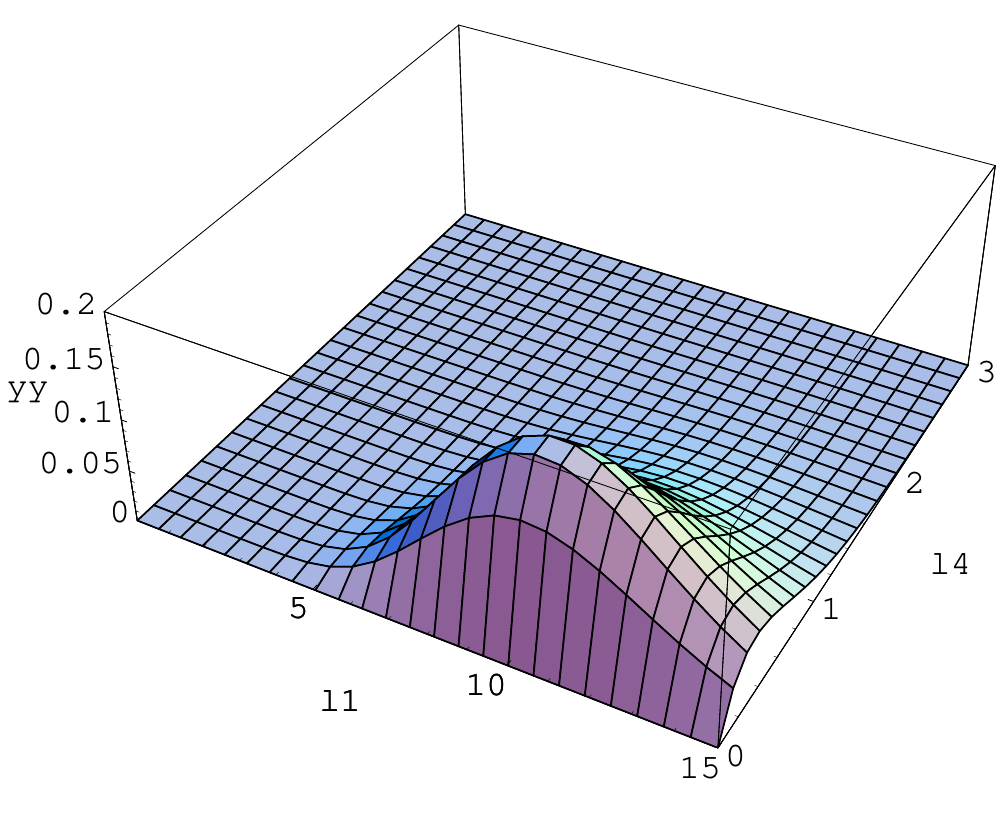}}
\caption{Joint probability density function of $\lambda_1$ and $\lambda_4$ of the uncorrelated central Wishart matrix with $M=4$ and $n=5$.} 
\label{fig:pdflambda14}
\end{figure}

\begin{figure}[h!]
\psfrag{yy}{}
\psfrag{l1}{$\lambda_1$}
\psfrag{l2}{$\lambda_2$}
\psfrag{l3}{$\lambda_3$}
\psfrag{l4}{$\lambda_4$}
\centerline{\includegraphics[width=7.5cm,draft=false]
    {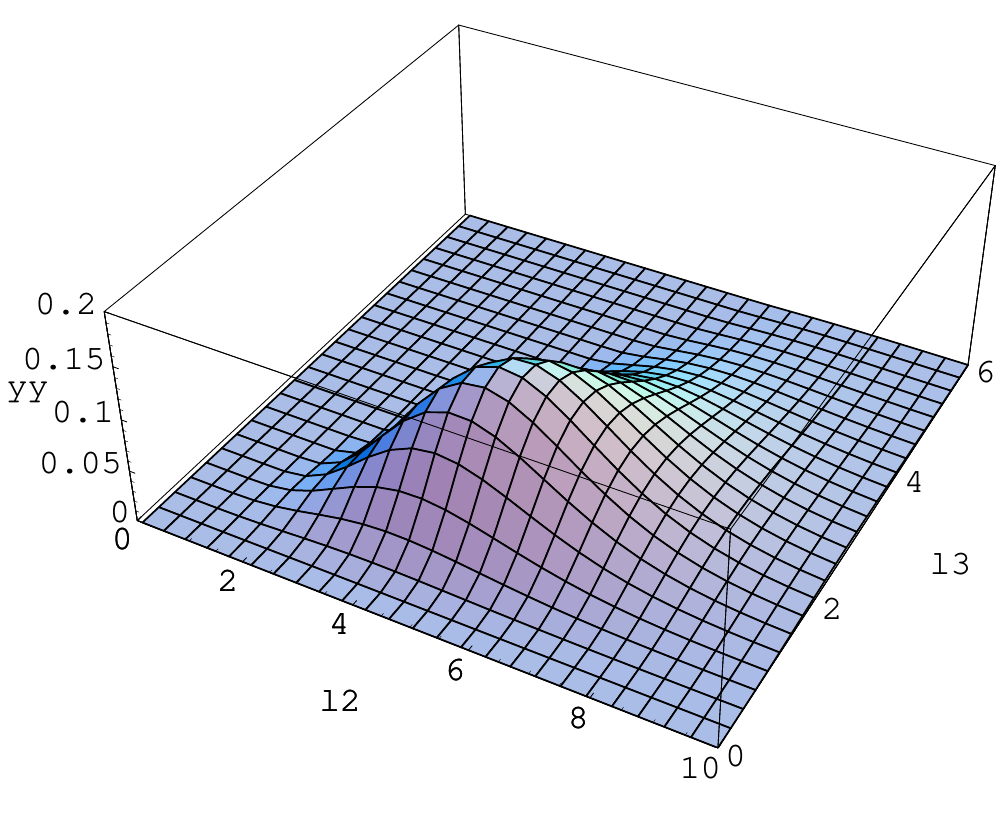}}
\caption{Joint probability density function of $\lambda_2$ and $\lambda_3$ of the uncorrelated central Wishart matrix with $M=4$ and $n=5$.} 
\label{fig:pdflambda23}
\end{figure}

\begin{figure}[h!]
\psfrag{yy}{}
\psfrag{l1}{$\lambda_1$}
\psfrag{l2}{$\lambda_2$}
\psfrag{l3}{$\lambda_3$}
\psfrag{l4}{$\lambda_4$}
\centerline{\includegraphics[width=7.5cm,draft=false]
    {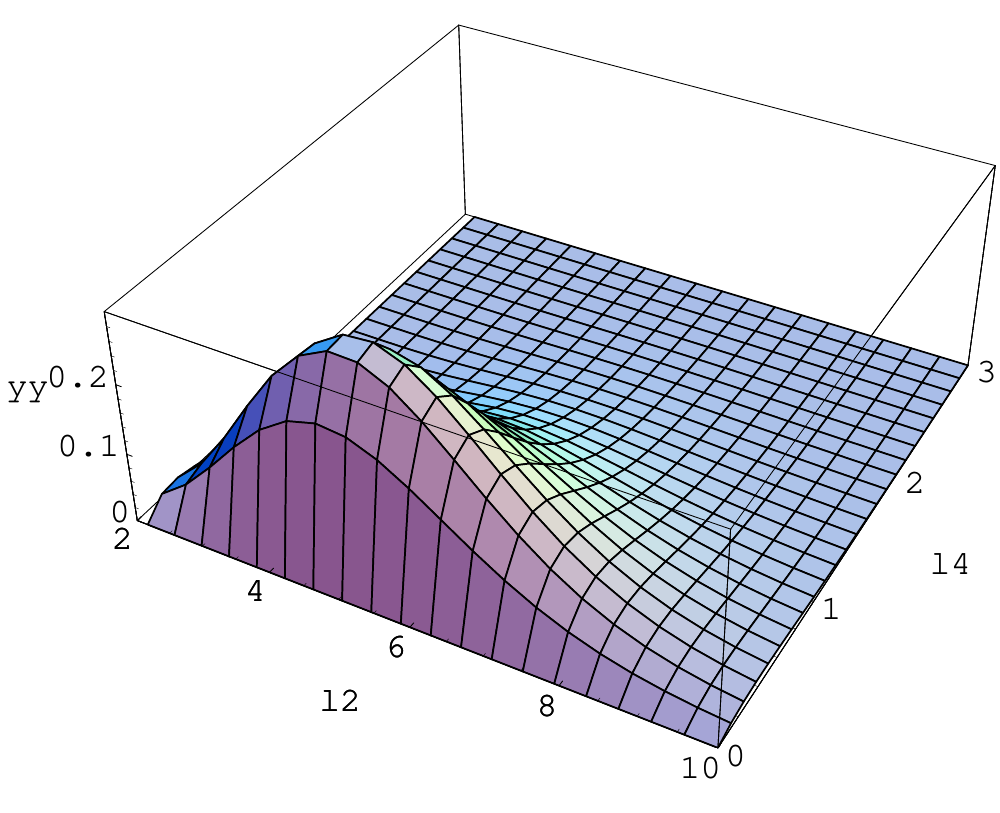}}
\caption{Joint probability density function of $\lambda_2$ and $\lambda_4$ of the uncorrelated central Wishart matrix with $M=4$ and $n=5$.} 
\label{fig:pdflambda24}
\end{figure}

\begin{figure}[h!]
\psfrag{yy}{}
\psfrag{l1}{$\lambda_1$}
\psfrag{l2}{$\lambda_2$}
\psfrag{l3}{$\lambda_3$}
\psfrag{l4}{$\lambda_4$}
\centerline{\includegraphics[width=7.5cm,draft=false]
    {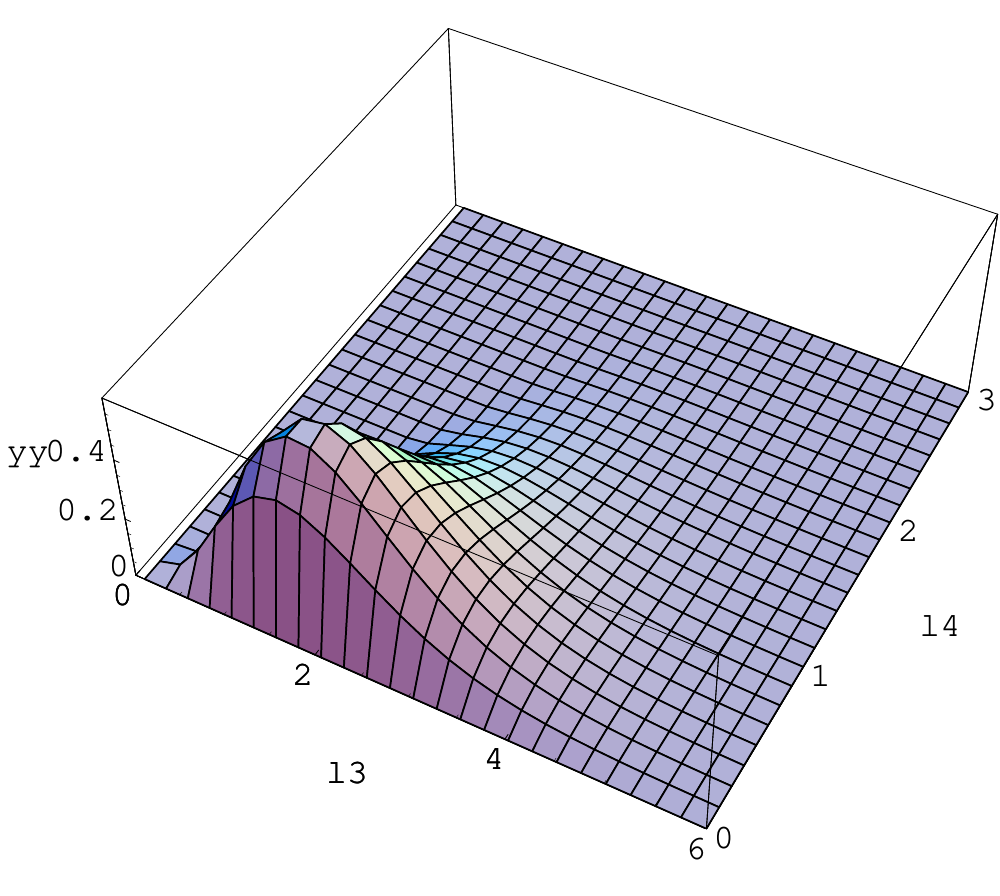}}
\caption{Joint probability density function of $\lambda_3$ and $\lambda_4$ of the uncorrelated central Wishart matrix with $M=4$ and $n=5$.} 
\label{fig:pdflambda34}
\end{figure}

\end{document}